\newcommand{\CAT}{C\!AT}
\newcommand{\bS}{\mathbb{S}}
\newcommand{\R}{\mathbb{R}}
\newcommand{\hM}{\widehat{M}}
\newcommand{\Z}{\mathbb{Z}}
\newcommand{\Aut}{\mathop{\mathrm{Aut}}\nolimits}
\newcommand{\T}{\mathcal{T}}
\newcommand{\U}{\mathcal{U}}
\newcommand{\Lob}{\mathbb{H}}
\newcommand{\Isom}{\mathop{\mathrm{Isom}}\nolimits}
\newcommand{\CC}{\mathscr{C}}
\newcommand{\RZ}{\mathcal{R}}
\newcommand{\dist}{\mathop{\mathrm{dist}}\nolimits}
\newcommand{\F}{\mathbf{F}}
\newcommand{\diam}{\mathop{\mathrm{diam}}\nolimits}
\newcommand{\cone}{\mathop{\mathrm{cone}}\nolimits}
\newcommand{\arcsinh}{\mathop{\mathrm{arsinh}}\nolimits}
\newcommand{\M}{\mathscr{M}}
\newtheorem{theorem}{Theorem}[section]
\newtheorem{propos}[theorem]{Proposition}
\newtheorem{cor}[theorem]{Corollary}
\newtheorem{conj}[theorem]{Conjecture}
\newtheorem{quest}[theorem]{Question}
\newtheorem*{URC}{Property URC}
\theoremstyle{definition}
\newtheorem{remark}[theorem]{Remark}
\author{Alexander A. Gaifullin}
\thanks{The work was partially supported by RFBR  (project 11-01-00694), by a grant of the President of Russian Federation (project MD-4458.2012.1), by a grant of the Government of the Russian Federation (project 2010-220-01-077), and by a programme of the Branch of Mathematical Sciences of the Russian Academy of Sciences.}
\title{Universal realisators for homology classes}
\date{}
\address{Steklov Mathematical Institute, Moscow\newline ${}$\quad
Moscow State University\newline ${}$\quad Institute for Information Transmission Problems, Moscow}
\email{agaif@mi.ras.ru}
\begin{document}

\begin{abstract}
We study oriented closed manifolds~$M^n$ possessing the following \textit{Universal Realisation of Cycles \textnormal{(}URC\textnormal{)} Property\/}: For each topological space~$X$ and each homology class $z\in H_n(X,\Z)$, there exist a finite-sheeted covering $\hM^n\to M^n$ and a continuous mapping $f:\hM^n\to X$ such that $f_*[\hM^n]=kz$ for a non-zero integer~$k$. We find wide class of examples of such manifolds~$M^n$ among so-called small covers of simple polytopes. In particular, we find $4$-dimensional hyperbolic manifolds possessing the URC property. As a consequence, we obtain that for each $4$-dimensional oriented closed manifold~$N^4$, there exists a mapping of non-zero degree of a hyperbolic manifold~$M^4$ to~$N^4$.  This was conjectured by Kotschick and L\"oh.  
\end{abstract}

\maketitle

\section{Introduction}
\label{section_intro}

Unless otherwise stated, all manifolds are supposed to be connected, compact, without boundary, and oriented.

Let $M^n$ and $N^n$ be manifolds of the same dimension~$n$. We say that $M^n$ \textit{dominates\/}~$N^n$ and write $M^n\geqslant N^n$ if there exists a non-zero degree mapping $M^n\to N^n$. Domination is a transitive relation on the set of homotopy types of connected oriented closed manifolds. This relation goes back to  the works of Milnor and Thurston~\cite{MiTh77} and Gromov~\cite{Gro82} and was first explicitly defined in the paper of Carlson and Toledo~\cite{CaTo89} with a reference to a lecture of Gromov. Good surveys of results on domination relation can be found in~\cite{Wan02} for the $3$-dimensional case and in~\cite{KoLo08} for an arbitrary dimension. Obviously, $M^n\geqslant S^n$ for every $n$-dimensional manifold~$M^n$. Hence the sphere~$S^n$ is the minimal element with respect to the domination relation.   
We shall focus on the following question of Carlson and Toledo~\cite{CaTo89} and the conjectural answer to it of Kotschick and L\"oh~\cite{KoLo08}.

\begin{quest}[Carlson--Toledo, 1989]
\label{quest_main} 
Is there an easily describable maximal class of homotopy types with respect to the domination relation, that is, a collection~$\CC_n$ of $n$-dimensional manifolds such that given any manifold~$N^n$, there exists an $M^n\in\CC_n$ satisfying $M^n\geqslant N^n$?
\end{quest}

Recall that a \textit{hyperbolic manifold\/} is a manifold admitting a Riemannian metric of constant negative sectional curvature.

\begin{conj}[Kotschick--L\"oh, 2008]\label{conj_main}
In every dimension $n\ge 2$, closed oriented hyperbolic manifolds represent a maximal class of homotopy types with respect to the domination relation.
\end{conj}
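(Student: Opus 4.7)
The plan is to prove the conjecture by exhibiting, in every dimension $n\ge 2$, a closed oriented hyperbolic manifold $M^n$ with the URC property stated in the abstract. Such a manifold automatically dominates every closed oriented $N^n$: apply URC to $X=N^n$ and to $z=[N^n]\in H_n(N^n,\Z)$ to obtain a finite-sheeted cover $\hM^n\to M^n$ and a map $f\colon\hM^n\to N^n$ with $f_*[\hM^n]=k[N^n]$ for some $k\ne 0$. Since a finite cover of a closed hyperbolic manifold is itself closed hyperbolic, $f$ is a degree-$k$ map from a hyperbolic manifold to $N^n$, and the conjecture follows. So the whole problem reduces to producing a URC hyperbolic manifold in each dimension.

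To build candidates I would follow the strategy indicated in the abstract and use small covers of simple right-angled polytopes. Let $P^n\subset\Lob^n$ be a compact simple polytope whose dihedral angles are all $\pi/2$, and let $\lambda$ be a characteristic function on the facets of $P^n$ in the sense of Davis--Januszkiewicz. The associated small cover $M^n(P,\lambda)$ is a closed hyperbolic $n$-manifold glued combinatorially from finitely many copies of $P^n$. The main task is then to verify URC for a well-chosen $P^n$. I would try to show, using the combinatorial rigidity of the gluing pattern, that an arbitrary singular $n$-cycle in any space $X$ can be subdivided and, up to a non-zero multiple, realised by a map from a finite cover of $M^n(P,\lambda)$; this should reduce to a local condition on the links in $P^n$ that can be checked by hand for distinguished examples such as the right-angled $120$-cell in dimension~$4$, or for truncated Coxeter polytopes in other low dimensions.

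The central obstacle, and the reason the full conjecture is so delicate, is that compact right-angled polytopes in $\Lob^n$ do not exist for $n\ge 5$: a classical theorem of Vinberg rules them out in this range, and in the finite-volume case Nikulin provides analogous bounds. Consequently the small-cover route supplies hyperbolic URC manifolds only in low dimensions. I would expect this line of argument, as described, to establish Kotschick--L\"oh in dimensions $2$, $3$, and $4$ (the last being the main new content), while for $n\ge 5$ one is forced either to drop right-angledness in favour of more general Coxeter polytopes and transport the URC verification through orbifold covers, or to abandon the small-cover framework entirely in search of hyperbolic manifolds with sufficiently symmetric fundamental domains to admit a URC verification by a different combinatorial scheme. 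This is the step I expect to be the serious difficulty in any attempt at the full conjecture.
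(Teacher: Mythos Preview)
Your overall framing matches the paper's: the paper likewise only establishes the conjecture for $n\le 4$, produces hyperbolic URC-manifolds from compact right-angled polytopes in $\Lob^n$, and cites Vinberg's nonexistence result as the obstruction for $n\ge 5$. Your first paragraph (URC plus the fact that finite covers of closed hyperbolic manifolds are closed hyperbolic $\Rightarrow$ domination by a hyperbolic manifold) is exactly the paper's reduction.

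The gap is in your middle paragraph, where the real work lies. You propose to verify URC for a specific polytope such as the $120$-cell via an unspecified ``local condition on the links checked by hand.'' That is not a method, and the paper does something structurally different. First it proves, by an explicit combinatorial construction (a free product of copies of~$\Z_2$ indexed by proper subsets of~$[n+1]$, a certain semidirect product with a group of automorphisms, and a map from cells of $\U_{\Pi^n}$ to simplices of an arbitrary pseudo-manifold), that the small cover $M_{\Pi^n}$ of the \emph{permutahedron} is URC in every dimension. Second, given any compact right-angled $P\subset\Lob^n$, it passes to a finite-index subgroup of the reflection group whose fundamental domain $P_1$ contains a ball of a prescribed radius $\rho_n$; radial projection from the centre yields a non-zero degree simplicial map $K_{P_1}\to(\partial\Delta^n)'=K_{\Pi^n}$, which in turn gives a non-zero degree map $\RZ_{P_1}\to\RZ_{\Pi^n}$. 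URC for $\RZ_{P_1}$, and hence for any manifold commensurable with it, is then inherited from the permutahedron.

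So the URC property is never checked for a hyperbolic polytope directly; it is pulled back from the permutahedron via domination. Your proposal is missing both ingredients: the base case (URC for $M_{\Pi^n}$, which is the hardest part of the paper) and the large-ball/$\varepsilon$-fineness mechanism that connects an arbitrary right-angled hyperbolic polytope to it.
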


This conjecture is trivially true for $n=2$. By a result of Brooks~\cite{Bro85}, it is also true for $n=3$. Besides, Kotschick and L\"oh~\cite{KoLo08} showed that it is true for $4$-dimensional manifolds with finite fundamental groups. One of the goals of the present paper is to prove Conjecture~\ref{conj_main} for $n=4$.

The following answer to Question~\ref{quest_main} was obtained by the author~\cite{Gai08a},~\cite{Gai08b} in 2008. In every dimension~$n$, there is a manifold~$M^n_0$ such that each manifold~$N^n$ is dominated by a finite-sheeted covering $\hM^n_0$ of $M^n_0$. Thus we can take for~$\CC_n$ the collection of all finite-sheeted coverings of~$M^n_0$. The manifold~$M^n_0$ is a so-called \textit{small cover\/} of the permutahedron (see sections~\ref{section_sc} and~\ref{section_urc_sc} for definitions). By a result of Tomei~\cite{Tom84}, $M^n_0$ is homeomorphic to an isospectral manifold of $(n+1)\times(n+1)$ tridiagonal symmetric real matrices, which is important for integrable systems, since it carries the famous Toda flow. 
However, for $n\ge 3$, the manifold~$M^n_0$ is not hyperbolic, since $\pi_1(M^n_0)$ contains a free Abelian subgroup of rank~$2$. We say that $M^n$ \textit{virtually dominates\/}~$N^n$ if there exists a finite-sheeted covering~$\hM^n$ of~$M^n$ such that $\hM^n\geqslant N^n$. Then the author's result in~\cite{Gai08a},~\cite{Gai08b}  can be formulated as follows.

\begin{theorem}\label{theorem_M0}
The manifold~$M^n_0$ is maximal with respect to the virtual domination relation, i.\,e., it virtually dominates every $n$-dimensional manifold~$N^n$.
\end{theorem}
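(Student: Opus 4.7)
The stated theorem is exactly the URC property of $M_0^n$ specialised to $X=N^n$ and $z=[N^n]$, so it suffices to establish URC for $M_0^n$. I would do this in three steps.

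\textbf{Step 1 (realisation by a simplicial pseudomanifold).} By the classical Steenrod--Thom theorem, for any space $X$ and any class $z\in H_n(X,\Z)$ there exist a positive integer $k_1$, an oriented closed simplicial pseudomanifold $Z^n$, and a continuous map $g\colon Z^n\to X$ with $g_*[Z^n]=k_1 z$. By passing to a barycentric subdivision I can assume the local combinatorics of $Z^n$ are controlled; in particular every closed star is the cone on the barycentric subdivision of some simplicial sphere.

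\textbf{Step 2 (permutahedral resolution).} The permutahedron $P^n$ is combinatorially dual to the barycentric subdivision of the standard $n$-simplex (equivalently, $P^n$ is the order polytope of the boolean lattice on $\{1,\dots,n+1\}$). Exploiting this duality I would produce from $Z^n$ a pseudomanifold $\widehat{Z}$ whose top-dimensional cells are copies of $P^n$, glued along common facets, together with a projection $\widehat{Z}\to Z^n$ of some nonzero degree $k_2$. Because the cells of $\widehat{Z}$ are all copies of one combinatorial polytope, the gluings inherit a canonical labelling of shared facets by the facet-set of $P^n$, which will be crucial in Step~3.

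\textbf{Step 3 (small-cover lift).} Recall that $M_0^n$ is the small cover of $P^n$ with respect to a canonical Davis--Januszkiewicz characteristic function $\lambda\colon\{\text{facets of }P^n\}\to\Z_2^n$; concretely, $M_0^n=(P^n\times\Z_2^n)/\!\sim$ where $(x,g)\sim(x,g+\lambda(F))$ whenever $x\in F$. Applying the same recipe to $\widehat{Z}\times\Z_2^n$, using the facet labelling from Step~2, yields a space $\hM$ together with a map $\hM\to M_0^n$ (the obvious permutahedron-by-permutahedron quotient map). If $\hM$ is a manifold, it is automatically a finite-sheeted covering of $M_0^n$, and the composition $\hM\to\widehat{Z}\to Z^n\to X$ sends $[\hM]$ to an integer multiple $2^n k_2 k_1\cdot z$ of $z$, completing the URC argument.

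\textbf{Main obstacle.} The hard step is ensuring that $\hM$ from Step~3 is actually a manifold, or equivalently that the facet-labels from Step~2 are globally consistent around every codimension-$2$ cell of $\widehat{Z}$. On a single $P^n$-cell the labelling is canonical, but across shared facets in $\widehat{Z}$ the labels may be permuted by the automorphism group $\Aut(P^n)\cong\Z_2\times S_{n+1}$, and the consistency obstruction is a cocycle with values in that group. I expect the bulk of the paper to be devoted to killing this obstruction by passing to a suitable finite cover $\widehat{Z}'\to\widehat{Z}$; the existence of such a cover should follow from residual finiteness of the relevant obstructing quotient together with the simple-connectedness of $P^n$. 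Once $\hM$ is replaced by the corresponding lift over $\widehat{Z}'$, the three steps combine to give the theorem.
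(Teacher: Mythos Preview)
Your Step~1 matches the paper. Steps~2--3, however, diverge from what the paper actually does, and the mechanism you propose for the ``main obstacle'' is not the right one.

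The paper does not build a permutahedral complex $\widehat{Z}$ out of~$Z$ and then try to lift. It works from the top down. After barycentrically subdividing~$Z$ and chess-colouring its $n$-simplices, one chooses for every nonempty proper $\omega\subset[n{+}1]$ an \emph{arbitrary} involution~$\lambda_\omega$ on the set~$A$ of $n$-simplices of~$Z$, pairing simplices that share their face of colour-type~$\omega$. These involutions give a homomorphism $\Lambda\colon\F\to S_A$ from the free product $\F=\ast_{\omega}\Z_2$. The core of the argument is then purely group-theoretic (Section~4): one realises $W=W_{\Pi^n}$ inside a semidirect product $\F\rtimes\Psi$ via generators $s_\omega=x_\omega\psi_\omega$, and extracts a map $\theta\colon W\to\F$ (not a homomorphism) satisfying $\theta(s_\omega g)=y\,x_\omega\,y^{-1}\theta(g)$ with $y\in\F_{>\omega}$. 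This identity is exactly what makes
\[
f([p,g])=[\pi(p),\,\Lambda(\theta(g))\cdot\sigma_0]
\]
well defined on~$\U_{\Pi^n}$, and a short argument shows $f$ factors through the finite quotient $\U_{\Pi^n}/\Gamma_H$, where $\Gamma_H$ is defined via the stabiliser $H\subset\F$ of~$\sigma_0$.

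The gap in your outline is at the point you yourself flag, but the nature of the obstruction is different from what you describe. Codimension-$2$ consistency of your~$\widehat{Z}$ amounts to asking that $\lambda_{\omega_1}$ and~$\lambda_{\omega_2}$ commute in~$S_A$ whenever $\omega_1\subset\omega_2$; for arbitrary pairings this simply fails, and it is not a cocycle in $\Aut(\Pi^n)$ that a finite cover $\widehat{Z}'\to\widehat{Z}$ (or residual finiteness of anything) will kill. The paper's insight is that one should not attempt to make the~$\lambda_\omega$ commute at all: instead one works with the genuinely commuting Coxeter generators $s_\omega\in W\subset\F\rtimes\Psi$ and absorbs the discrepancy into the correction term~$y\in\F_{>\omega}$ carried by~$\theta$. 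Nothing in your sketch anticipates this device, and without it Steps~2--3 do not go through.
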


Gromov~\cite{Gro87} suggested a \textit{hyperbolisation procedure\/} that for each polyhedron~$P$, yields a polyhedron~$P_h$ and a mapping~$f:P_h\to P$ such that $P_h$ has a metric of non-positive curvature in sense of Alexandrov (see definition in section~\ref{section_urc_sc}), $f$ induces an injection in integral cohomology, and singularities of~$P_h$ are ``not worse'' than  singularities of~$P$. In particular, if~$P$ is a PL manifold, then $P_h$ is also a PL manifold and $f:P_h\to P$ is a degree~$1$ mapping. Besides,  $f$ takes the rational Pontryagin classes of~$P$ to the rational Pontryagin classes of~$P_h$. This result of Gromov implies that each manifold can be dominated (with degree~$1$) by a manifold of non-positive curvature. Gromov's hyperbolisation procedure has been improved by Davis--Januszkiewicz~\cite{DaJa91b} and Charney--Davis~\cite{ChDa95}. The strongest result has recently been  obtained by Ontaneda~\cite{Ont11}, who has proved that, for a smooth manifold~$P$, $P_h$ can be constructed to be a Riemannian manifold of negative sectional curvature in an arbitrarily small interval~$[-1-\varepsilon,-1]$. However, even the class of all Riemannian manifolds with such pinched sectional curvature seems to be too wide for Question~\ref{quest_main}. Theorem~\ref{theorem_M0} shows that more narrow classes~$\CC_n$ can be found.  

Until now most results on domination relation in dimensions $n>3$ have been negative. For example, it is known that there exist manifolds that cannot be dominated by a K\"ahler manifold~\cite{CaTo89} and there exist manifolds that cannot be dominated by a direct product $M_1\times M_2$ of two manifolds of positive dimensions~\cite{KoLo08}.

The domination relation is closely related to Steenrod's problem on realisation of cycles. The classical question of Steenrod is as follows. Given a homology class $z\in H_n(X,\Z)$ of a topological space~$X$, does there exist a smooth manifold $M^n$ and a continuous mapping~$f:M^n\to X$ such that $f_*[M^n]=z$? If such $M^n$ and~$f$ exist, we shall say that $z$ is \textit{realisable\/}. A well-known theorem of Thom~\cite{Tho54} claims that there exist non-realisable homology classes, but, for each~$z$,  a certain multiple~$kz$, $k\in\Z_{>0}$, is realisable. An important question is to describe a class~$\CC_n$ of $n$-dimensional manifolds such that each $n$-dimensional homology class of each space~$X$ can be realised as an image of the fundamental class of a manifold belonging to~$\CC_n$.  Thom's theorem easily implies that this question is equivalent to Question~1.1. In particular, a manifold~$M^n$ is  maximal with respect to the virtual domination relation if and only if it has the following \textit{Universal Realisation of Cycles\/} (URC) \textit{Property\/}.

\begin{URC}
For each~$X$ and each $z\in H_n(X,\Z)$, there exist a finite-sheeted covering~$\hM^n$ of~$M^n$ and a mapping $f:\hM^n\to X$ such that $f_*\bigl[\hM^n\bigr]=kz$ for a non-zero integer~$k$.
\end{URC}

Manifolds satisfying this condition will be called \textit{URC-manifolds}. By Theorem~\ref{theorem_M0}, URC-manifolds exist in every dimension. 
It is an interesting problem to describe the class of URC-manifolds. The following proposition is straightforward.

\begin{propos}\label{propos_obvious}
1) Suppose $\hM^n$ is a finite-sheeted covering of~$M^n$; then $\hM^n$ is a URC-manifold if and only if $M^n$ is a URC-manifold.

2) Suppose $M^n$ is a URC-manifold; then the connected sum $M^n\# N^n$ is a URC-manifold for every~$N^n$. 

3) Suppose $N^n$ is a URC-manifold; then $M^n$ is a URC-manifold if and only if $M^n$ virtually dominates~$N^n$.
\end{propos}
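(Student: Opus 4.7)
All three parts of Proposition~\ref{propos_obvious} are established by short diagram-chases involving fiber products of coverings and degree-one collapse maps.

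Part~(1) is formal. The implication ``$\hM^n$ URC $\Rightarrow$ $M^n$ URC'' is tautological, since every finite cover of $\hM^n$ is a finite cover of $M^n$. For the converse, given $z\in H_n(X,\Z)$, the URC property of~$M^n$ yields a finite cover $\tM^n\to M^n$ and a map $g\colon\tM^n\to X$ with $g_*[\tM^n]=kz$. Any connected component of the fiber product $\tM^n\times_{M^n}\hM^n$ is a connected finite cover of both factors, and composing with $g$ produces a map from a cover of $\hM^n$ realising a non-zero multiple of~$kz$.

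For~(2), use the standard degree-one collapse $c\colon M^n\mathbin{\#}N^n\to M^n$; one checks easily that $c_*$ is surjective on fundamental groups. Given URC data $p\colon\hM^n\to M^n$ and $f\colon\hM^n\to X$ with $f_*[\hM^n]=kz$, let $W^n\to M^n\mathbin{\#}N^n$ be the finite cover associated with the subgroup $c_*^{-1}(p_*\pi_1(\hM^n))$; by the surjectivity of $c_*$ it has the same index as $\hM^n\to M^n$. There is a lift $\tilde c\colon W^n\to\hM^n$ of~$c$, and comparing degrees in the identity $c\circ p_W=p\circ\tilde c$ forces $\deg\tilde c=1$. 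Consequently $(f\circ\tilde c)_*[W^n]=kz$, so $M^n\mathbin{\#}N^n$ is URC.

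For~(3), the forward direction is immediate by applying the URC property of $M^n$ to the class $[N^n]\in H_n(N^n,\Z)$: one obtains a finite cover of $M^n$ with a map to $N^n$ of non-zero degree. For the converse, assume that $g\colon\hM^n\to N^n$ has degree $d\neq 0$ for some finite cover $\hM^n\to M^n$, and that $N^n$ is URC. Given $z\in H_n(X,\Z)$, find a cover $q\colon\hat N^n\to N^n$ with a map $f\colon\hat N^n\to X$ satisfying $f_*[\hat N^n]=kz$, and form the fiber product $\hM^n\times_{N^n}\hat N^n$, which is a finite cover of $\hM^n$. Since the degrees of its connected components over $\hat N^n$ sum to $d\neq 0$, some component $Z^n$ has non-zero degree $d'$ onto $\hat N^n$; then $Z^n$ is a finite cover of $M^n$ and the composition $Z^n\to\hat N^n\to X$ realises the non-zero multiple $d'kz$ of~$z$. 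The only point that requires genuine care is this last selection of a component with non-trivial contribution to the degree sum, which is unavoidable because $g$ itself is not a covering.
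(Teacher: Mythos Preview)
Your proof is correct. The paper itself does not supply a proof of this proposition: it simply states that the result is ``straightforward'' and moves on, so there is no argument to compare against. Your write-up fills in the routine covering-space details accurately; in particular, the fiber-product constructions in parts~(1) and~(3), the surjectivity of $c_*$ on $\pi_1$ in part~(2) (valid for $n\ge 2$), and the selection of a component with non-zero degree in part~(3) are all handled correctly.
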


In this paper, our goal is to obtain a wide collection of examples of URC-manifolds. 
These examples are small covers of simple polytopes satisfying certain special conditions. In particular, in dimensions $n\le 4$ we shall find hyperbolic URC-manifolds. Thus we shall prove  Conjecture~\ref{conj_main} for $n=4$. 

We denote by $\bS^n$, $\R^n$, and $\Lob^n$ the spaces of constant curvature~$+1$, $0$, and~$-1$ respectively, i.\,e., the round sphere of radius~$1$, the Euclidean space, and the Lobachevsky space.

Each compact hyperbolic manifold $M^n$ is isometric to the quotient of~$\Lob^n$ by the action of a torsion-free uniform discrete transformation group~$\Gamma\subset\Isom(\Lob^n)$.    
Recall that two discrete subgroups $\Gamma_1,\Gamma_2\subset\Isom(\Lob^n)$ are said to be \textit{commensurable\/} if $\Gamma_1\cap\Gamma_2$ has finite indices in both~$\Gamma_1$ and~$\Gamma_2$.
A discrete subgroup $W\subset\Isom(\Lob^n)$ is called a \textit{reflection subgroup\/} if it is generated by orthogonal reflections in hyperplanes $H\subset\Lob^n$. A reflection subgroup is called \textit{right-angular\/} if the mirrors of any two reflections $s,s'\in W$ either are orthogonal to each other or do not intersect.

\begin{theorem}\label{theorem_hyp}
Let $M^n=\Lob^n/\Gamma$ be a compact hyperbolic manifold such that the group $\pi_1(M^n)=\Gamma$ is commensurable with a uniform right-angular reflection subgroup $W\subset\Isom(\Lob^n)$. Then  $M^n$ is a URC-manifold.	
\end{theorem}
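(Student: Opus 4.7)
The plan is to reduce Theorem~\ref{theorem_hyp} to a URC statement for a small cover of the fundamental polytope of the reflection group~$W$, and then to apply the general URC theorem for small covers of flag simple polytopes that is the principal new result of this paper.

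Set $\Gamma_0=\Gamma\cap W$. By the commensurability hypothesis, $\Gamma_0$ has finite index in both $\Gamma$ and $W$, and since $\Gamma$ is torsion-free, so is $\Gamma_0$. Hence $\Lob^n/\Gamma_0$ is a compact hyperbolic manifold and a finite-sheeted cover of $M^n$, and by Proposition~\ref{propos_obvious}(1) it suffices to establish the URC property for $\Lob^n/\Gamma_0$. Let $P\subset\Lob^n$ be the fundamental polytope of $W$; as $W$ is right-angular and cocompact, $P$ is a simple right-angled hyperbolic polytope, hence a \emph{flag} polytope --- any pairwise-intersecting collection of facets of $P$ has a common vertex. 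Let $r_1,\ldots,r_m$ be the Coxeter generators of $W$, one per facet. Choose a characteristic function on the facets, i.e.\ a labeling by vectors in $(\Z/2)^n$ such that the $n$ facets meeting at any vertex map to a basis of $(\Z/2)^n$; the resulting homomorphism $\lambda:W\to(\Z/2)^n$ has torsion-free kernel $K$, and $M_\lambda=\Lob^n/K$ is a compact hyperbolic small cover of $P$. (Should no characteristic function into $(\Z/2)^n$ exist, one replaces $K$ by $[W,W]$, always torsion-free in a right-angled Coxeter group, obtaining a finite abelian cover of $P$ serving the same purpose.) Since $\Gamma_0$ and $K$ are both of finite index in $W$, the space $\Lob^n/(\Gamma_0\cap K)$ is a common finite-sheeted cover of $\Lob^n/\Gamma_0$ and of $M_\lambda$, so Proposition~\ref{propos_obvious}(1) reduces the URC property for $M^n$ to the URC property for $M_\lambda$.

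The remaining step is to prove that the small cover $M_\lambda$ of the flag polytope $P$ is URC. I would do this by establishing, as a separate structural theorem of the paper, that \emph{every} small cover of a flag simple polytope is a URC-manifold --- a statement that generalises Theorem~\ref{theorem_M0} from the one specific polytope (the permutahedron and its small cover $M^n_0$) to all flag simple polytopes. The \textbf{main obstacle} is precisely this general structural theorem: the flag condition on $P$ is exactly what makes $M_\lambda$, in its canonical cubical metric, an Alexandrov space of nonpositive curvature, and this curvature property is essential for the cycle-realisation construction of \cite{Gai08a},\cite{Gai08b} to work. Adapting that construction --- carried out there only for the permutahedron --- to an arbitrary flag simple polytope, and proving that any $z\in H_n(X,\Z)$ can be realised as $f_*[\hM_\lambda]$ for a suitable finite cover $\hM_\lambda\to M_\lambda$ and a map $f:\hM_\lambda\to X$, is the serious technical undertaking.
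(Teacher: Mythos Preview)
Your reduction to the manifold $\RZ_P$ (or a small cover $M_\lambda$) associated with the fundamental polytope~$P$ of~$W$ is correct and matches the paper's strategy. The gap is in the final step. You propose to establish, as the core structural result, that \emph{every small cover of a flag simple polytope is a URC-manifold}. This statement is false. If $P_1$ and $P_2$ are flag simple polytopes of positive dimension, then $P_1\times P_2$ is again flag, but $\RZ_{P_1\times P_2}=\RZ_{P_1}\times\RZ_{P_2}$ cannot be a URC-manifold: by Kotschick--L\"oh, a nontrivial direct product never dominates all manifolds of its dimension. The flag condition gives the cubical complex a $\CAT(0)$ metric, and $\CAT(0)$ is simply not enough.

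What the paper actually uses is \emph{strictly negative} curvature. For a right-angled polytope $P\subset\Lob^n$, the universal cover $\U_P$ is $\Lob^n$ itself, with its $W$-invariant metric of constant curvature~$-1$; this is the input to Theorem~\ref{theorem_CAT(-1)}, which requires $K\le -1$, not $K\le 0$. The paper's direct proof in Section~7 exploits this in a different but equivalent way: rather than working with $P$ itself, one passes to a finite-index reflection subgroup $W_1\subset W$ whose fundamental polytope $P_1$ is obtained by keeping only those mirrors of $W$ at distance $>\rho_n$ from a chosen point~$o$. Then $P_1$ contains a ball of radius~$\rho_n$, so Theorem~\ref{theorem_sc}(d) applies to $\RZ_{P_1}$, and commensurability finishes the argument. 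In either formulation, the essential geometric ingredient is the hyperbolic metric on $\Lob^n$, not merely the flagness of~$P$; your proposal discards exactly this ingredient and replaces it with a weaker combinatorial hypothesis that does not suffice.
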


Uniform right-angular reflection subgroups $W\subset\Isom(\Lob^n)$ correspond to compact right-angular polytopes $P\subset\Lob^n$. (It is well known that every compact right-angular polytope is simple.) Compact right-angular polytopes~$P\subset\Lob^n$ exist for $n=2,3,4$.  For example, there exist the  right-angular regular dodecahedron in~$\Lob^3$ and the right-angular regular $120$-cell in~$\Lob^4$. Many examples of compact right-angular polyhedra in~$\Lob^3$ were obtained by L\"obell~\cite{Loe31}.
Hence, for $n=2,3,4$, there exist uniform right-angular reflection subgroups $W\subset\Isom(\Lob^n)$.
By Selberg's lemma each such subgroup~$W$ contains a finite index torsion-free subgroup~$\Gamma$. Then~$\Lob^n/\Gamma$ is a  hyperbolic URC manifold.

\begin{cor}
In dimensions $n=2,3,4$, there exist hyperbolic URC-manifolds. Consequently, each manifold~$N^n$ can be dominated by a hyperbolic manifold.
\end{cor}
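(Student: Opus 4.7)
The plan is to assemble this corollary directly from Theorem~\ref{theorem_hyp}, the existence of compact right-angular hyperbolic polytopes in low dimensions, Selberg's lemma, and Proposition~\ref{propos_obvious}. The discussion preceding the statement already lists most ingredients, so the task is to stitch them together cleanly and derive the domination conclusion.

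\textbf{Step 1: produce a compact right-angular polytope in each dimension.} For $n=2$ any right-angled hyperbolic pentagon (or any right-angled $k$-gon with $k\ge 5$) works; for $n=3$ one can take the right-angular regular dodecahedron (or the L\"obell polyhedra~\cite{Loe31}); for $n=4$ the right-angular regular $120$-cell provides an example. In each case, reflection in the facets of the polytope $P\subset\Lob^n$ generates a uniform (cocompact) right-angular reflection subgroup $W\subset\Isom(\Lob^n)$ with fundamental domain $P$.

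\textbf{Step 2: pass to a torsion-free manifold cover.} Since $W$ is a finitely generated subgroup of the matrix group $\Isom(\Lob^n)$, Selberg's lemma furnishes a torsion-free normal subgroup $\Gamma\triangleleft W$ of finite index. As $\Gamma$ is torsion-free, cocompact and discrete, the quotient $M^n=\Lob^n/\Gamma$ is a compact hyperbolic manifold with $\pi_1(M^n)=\Gamma$. By construction $\Gamma$ is commensurable (indeed, a finite-index subgroup) of the right-angular reflection group $W$, so the hypotheses of Theorem~\ref{theorem_hyp} are satisfied. Therefore $M^n$ is a hyperbolic URC-manifold, proving the first assertion of the corollary.

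\textbf{Step 3: deduce the domination statement.} Given any closed oriented manifold $N^n$ (with $n\in\{2,3,4\}$), apply the URC property of $M^n$ to the homology class $z=[N^n]\in H_n(N^n,\Z)$ and the identity space $X=N^n$. This yields a finite-sheeted covering $\hM^n\to M^n$ and a map $f\colon \hM^n\to N^n$ with $f_*[\hM^n]=k[N^n]$ for some nonzero integer $k$. Thus $\deg f=k\neq 0$, i.e.\ $\hM^n\geqslant N^n$. Because $\hM^n$ is a finite cover of a closed hyperbolic manifold, it inherits the hyperbolic structure and is itself a closed hyperbolic manifold. This gives the desired hyperbolic dominator of $N^n$.

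No step is a genuine obstacle: once Theorem~\ref{theorem_hyp} is granted, the only outside inputs are the well-known existence of right-angular regular polytopes in $\Lob^n$ for $n\le 4$ and Selberg's lemma. The corollary is therefore essentially a direct translation of the URC-property into a domination statement, combined with the observation that finite covers of hyperbolic manifolds remain hyperbolic.
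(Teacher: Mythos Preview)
Your argument is correct and follows essentially the same route as the paper: existence of compact right-angular polytopes in $\Lob^n$ for $n=2,3,4$, Selberg's lemma to obtain a torsion-free finite-index subgroup $\Gamma\subset W$, and then Theorem~\ref{theorem_hyp} to conclude that $\Lob^n/\Gamma$ is a hyperbolic URC-manifold. The domination statement is then immediate from the URC property applied to $z=[N^n]$, together with the fact that finite covers of hyperbolic manifolds are hyperbolic, exactly as you spell out in Step~3.
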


Notice that even in dimension~$3$ this assertion is stronger than a result of Brooks~\cite{Bro85} because the hyperbolic manifolds obtained by the Brooks construction are not coverings of the same  manifold. 

By a result of Vinberg~\cite{Vin84}, for $n\ge 5$, there are no compact right-angular polytopes $P\subset\Lob^n$ and, hence, there are no uniform right-angular reflection subgroups $W\subset\Isom(\Lob^n)$. Thus Theorem~\ref{theorem_hyp} cannot be used for $n\ge 5$. 

This paper is organised in the following way. 
In section~\ref{section_sc} we describe several constructions of manifolds glued of simple polytopes, in particular, the construction of so-called \textit{small covers\/} of simple polytopes due to Davis and Januszkiewicz~\cite{DaJa91a}. 
These constructions play key role throughout the paper. Main results are formulated in section~\ref{section_urc_sc}. They include several sufficient conditions for small covers of simple polytopes to be URC-manifolds. In particular, we prove that a small cover of a polytope is a URC-manifold whenever it admits an equivariant metric of strictly negative curvature in sense of Alexandrov (with at least one ``smooth'' point). The proofs of these results as well as the proof of Theorem~\ref{theorem_hyp} are given in sections~\ref{section_proof_sc} and~\ref{section_proof_hyp}.

Sections~\ref{section_group_constr} and~\ref{section_Pi_constr} are devoted to the proof of Theorem~\ref{theorem_M0}. Actually, this theorem has been proved by the author in~\cite{Gai08a},~\cite{Gai08b}. However, in the present paper we give a new interpretation of this proof based on a certain group-theoretic construction that essentially clarifies the proof.

The author is grateful to V.~M.~Buchstaber and E.~B.~Vinberg for useful comments.

\section{Manifolds glued of simple polytopes}
\label{section_sc}

We denote by~$[m]$ the set $\{1,2,\ldots,m\}$.

Let $S$ be a finite set and let $G$ be a graph on the vertex set $S$. Recall that a \textit{right-angular Coxeter group} associated to $G$ is the group~$W$ with the set of generators $S$ such that all relations in $W$ are consequences of relations $s^2=1$, $s\in S$, and $st=ts$ whenever $\{s,t\}$ is an edge of~$G$. The pair $(W,S)$ is called a \textit{right-angular Coxeter system\/}.

The following general construction is due to Vinberg~\cite{Vin71}. Let $X$ be a topological space 
and let $X_s$ be closed subsets of it indexed by elements $s\in S$. For a point $p\in X$, denote by $W(p)$ the subgroup of $W$ generated by all elements $s\in S$ such that $p\in X_s$. Endow $W$ with the discrete topology. Consider an equivalence relation~$\sim$ on $X\times W$ such that $(p,g)\sim (p',g')$ if and only if $p=p'$ and $g'g^{-1}\in W(p)$, and denote the quotient space $(X\times W)/\sim$ by $\U(W,X,\{X_s\})$. Denote by $[p,g]$ the point of $\U(W,X,\{X_s\})$ corresponding to the equivalence class of~$(p,g)$. The natural \textit{right} action of $W$ on $X\times W$ is compatible with~$\sim$. Hence the right action of $W$ on $\U(W,X, \{X_s\})$ given by $[p,g]\cdot g'=[p,gg']$ is well defined.

\begin{remark}
Usually in this construction the condition $g'g^{-1}\in W(p)$ is replaced with the condition $g^{-1}g'\in W(p)$ to obtain a \textit{left\/} action of~$W$ on $\U(W,X, \{X_s\})$. However, it is convenient for us to have a \textit{right\/} action here.
\end{remark}

Davis~\cite{Dav83}, \cite{Dav87}, and Davis and Januszkiewicz~\cite{DaJa91a} applied this construction to obtain a wide class of manifolds glued of simple convex polytopes. Here we describe several their constructions. 

Let $P$ be a simple convex polytope in either~$\R^n$ or~$\Lob^n$ with facets $F_1,\ldots,F_m$. We always assume that $P$ is $n$-dimensional, i.\,e., is not contained in a hyperplane, and compact. Recall that a polytope is said to be \textit{simple\/} if every its vertex is contained in exactly $n$ facets.

1. Let $S=\{s_1,\ldots,s_m\}$ and let $\{s_i,s_j\}$ be an edge of~$G$ if and only if $F_i\cap F_j\ne\emptyset$. Denote by $W_P$ the right-angular Coxeter group associated to the graph~$G$. Let $X=P$ and $X_{s_i}=F_i$, $i=1,\ldots,m$. Put $\U_P=\U(W_P,P,\{F_i\})$. Then
$$
\U_P=(P\times W_P)/\sim,
$$
where $(p,g)\sim (p',g')$ if and only if $p=p'$ and $g'g^{-1}$ belongs to the subgroup $W(p)\subset W_P$ generated by all~$s_i$ such that~$p\in F_i$.

2. Let $S$, $X$, $X_{s_i}$ be as in the previous construction and let $G$ be a complete graph on the vertex set $S$. Then the corresponding right-angular Coxeter group is~$\Z_2^m$. We shall use the multiplicative notation for~$\Z_2^m$ and we shall denote its generators by $a_1,\ldots,a_m$ to avoid confusion with the generators $s_1,\ldots,s_m$ of~$W_P$. Put $\RZ_P=\U(\Z_2^m,P,\{F_i\})$. Then
$$
\RZ_P=(P\times\Z_2^m)/\sim,
$$
where $(p,g)\sim (p',g')$ if and only if $p=p'$ and $g'g^{-1}$ belongs to the subgroup of~$\Z_2^m$ generated by all~$a_i$ such that~$x\in F_i$. 

3. Suppose that facets of~$P$ are coloured in colours from a finite set. We say that a colouring is \textit{regular\/} if any two intersecting facets are of distinct colours. Assume that~$P$ admits a regular colouring of facets in $n$ colours, which we denote by $1,\ldots,n$. Let $S=\{b_1,\ldots,b_n\}$, $X=P$, and let $X_{b_i}$ be the union of all facets of colour $i$. Let $G$ be a complete graph on the vertex set $S$. Then the corresponding right-angular Coxeter group is~$\Z_2^n$.  Put $M_P=\U(\Z_2^n,P,\{X_{b_i}\})$. Then 
$$
M_P=(P\times\Z_2^n)/\sim,
$$
where $(p,g)\sim (p',g')$ if and only if $p=p'$ and $g'g^{-1}$ belongs to the subgroup of~$\Z_2^n$ generated by all~$b_i$ such that~$p$ is contained in a facet of colour~$i$. The manifold $M_P$ is called the \textit{small cover of~$P$ induced from a linear model\/}. Notice that if a regular colouring of facets of~$P$ exists, it is unique up to a permutation of colours.   

4. This is a generalization of the previous construction. A \textit{characteristic function\/} for~$P$ is a mapping $\lambda:[m]\to\Z_2^n$ that satisfies the following condition:
\begin{itemize}
\item[] \textit{The elements $\lambda(i_1),\ldots,\lambda(i_n)$  generate~$\Z_2^n$ whenever $i_1,\ldots,i_n$ are pairwise distinct and $F_{i_1}\cap\ldots\cap F_{i_n}\ne\emptyset$.}
\end{itemize}  
Given a characteristic function, we can define a \textit{small cover\/} of~$P$ by
$$
M_{P,\lambda}=(P\times\Z_2^n)/\sim_{\lambda},
$$
where $(p,g)\sim_{\lambda} (p',g')$ if and only if $p=p'$ and $g'g^{-1}$ belongs to the subgroup of~$\Z_2^n$ generated by all~$\lambda(a_i)$ such that~$p\in F_i$. If every $\lambda(i)$ is contained in the basis $b_1,\ldots,b_n$, we obtain the small covering induced from a linear model described above.
Notice that there exist polytopes that admit no characteristic functions~$\lambda$.

\begin{remark}
The manifolds~$\RZ_P$ and~$M_{P,\lambda}$ have \textit{``complex''\/} analogues that are obtained by replacing the groups~$\Z_2^m$ and~$\Z_2^n$ with the compact tori~$T^m$ and~$T^n$ and are called \textit{moment-angle manifolds\/} and \textit{quasi-toric manifolds\/} respectively. These manifolds have also been defined by Davis and Januszkiewicz~\cite{DaJa91a}. The theory of moment-angle manifolds and quasi-toric manifolds has been fruitfully developed and has found many applications, see~\cite{BuPa02} and references therein. 
\end{remark}

The spaces $\U_P$, $\RZ_P$, and $M_{P,\lambda}$ are glued of copies of the polytope~$P$ indexed by elements of the groups~$W_P$, $\Z_2^m$, and~$\Z_2^n$ respectively. Hence these spaces have natural cell decompositions with each $n$-dimensional cell isomorphic to~$P$. 
It is not hard to see that each vertex $v$ of each of these decompositions is contained in exactly $2^n$ \ $n$-dimensional cells. Moreover, the corners of these $2^n$ cells meet at vertex~$v$ in a standard way, i.\,e., as the corners of orthants at the origin of~$\R^n$.  
Hence $\U_P$, $\RZ_P$, and $M_{P,\lambda}$ are PL manifolds. Besides, the dual cell decompositions~$\U_P^*$, $\RZ_P^*$, and $M_{P,\lambda}^*$  are decompositions into cubes.

The manifolds~$\U_P$, $\RZ_P$, and $M_{P,\lambda}$ carry natural right actions of the groups $W_P$, $\Z_2^m$, and~$\Z_2^n$ respectively and
$$
\U_P/W_P=\RZ_P/\Z_2^m=M_{P,\lambda}/\Z_2^n=P.
$$ 
Further, we have surjective homomorphisms $\eta:W_P\to \Z_2^m$ and $\lambda_*:\Z_2^m\to\Z_2^n$ given by $\eta(s_i)=a_i$ and $\lambda_*(a_i)=\lambda(i)$ respectively. It can be easily seen that the groups $\ker\eta$ and $\ker\lambda_*$ act freely on~$\U_P$ and~$\RZ_P$ respectively and
$$
\U_P/\ker\eta=\RZ_P,\qquad \RZ_P/\ker\lambda_*=M_{P,\lambda}.
$$
Thus $\U_P$ is a regular covering of~$\RZ_P$ and~$\RZ_P$ is a $2^{m-n}$-sheeted regular covering of~$M_{P,\lambda}$.
Vice versa, the quotient of~$\RZ_P$ by any subgroup~$A\subset\Z_2^m$, $A\cong\Z_2^{m-n}$, that acts freely on it is a small cover of~$P$. 

Davis~\cite{Dav83} proved that the manifold~$\U_P$ is simply connected. Hence $\U_P$ is the universal covering of the manifolds~$\RZ_P$ and~$M_{P,\lambda}$.
A simple polytope~$P$ is called a \textit{flag polytope\/} if a collection $F_{i_1},\ldots,F_{i_r}$ of its facets has non-empty intersection whenever all pairwise intersections $F_{i_k}\cap F_{i_l}$ are non-empty. Davis~\cite{Dav83} proved that if $P$ is a flag polytope, then the manifold~$\U_P$ is contractible. Hence, the manifolds~$\RZ_P$ and~$M_{P,\lambda}$ are aspherical and their fundamental groups are isomorphic to the groups~$\ker\eta$ and~$\ker(\lambda_*\eta)$ respectively.

To a simple polytope~$P$ with facets $F_1,\ldots,F_m$ is assigned a simplicial complex~$K_P$ on the vertex set $\{v_1,\ldots,v_m\}$ such that vertices $v_{i_1},\ldots,v_{i_k}$ span a simplex in~$K_P$ if and only if $F_{i_1}\cap\ldots\cap F_{i_k}\ne\emptyset$. The complex~$K_P$ is an $(n-1)$-dimensional combinatorial sphere. (A simplicial complex~$K$ is called an $(n-1)$-dimensional \textit{combinatorial sphere\/} if it is PL homeomorphic to the boundary of the $n$-dimensional simplex.) If $P$ is a simple polytope in~$\R^n$, then $K_P$ is isomorphic to the boundary of the dual simplicial polytope~$P^*$. 

The constructions of manifolds~$\U_P$, $\RZ_P$, and~$M_{P,\lambda}$ can be extended to a wider class of objects than simple polytopes.
It is well known that not every combinatorial sphere can be realised as the boundary of a simplicial polytope. Nevertheless, it is convenient to assign to every combinatorial sphere~$K$ certain object~$P_K$ which turns to be the dual simple polytope whenever $K$ is the boundary of a simplicial polytope. The construction is as follows. Suppose $K$ is an $(n-1)$-dimensional combinatorial sphere on the vertex set~$\{v_1,\ldots,v_m\}$.
Let $P_K=\cone(K')$ be the cone over the barycentric subdivision of~$K$ and let $F_i$ be the star of a vertex~$v_i$ in~$K'$, that is, a subcomplex of~$K'$ consisting of all simplices containing~$v_i$. We  say that~$P_K$ is the \textit{simple cell\/} dual to~$K$, $F_i$ are \textit{facets\/} of~$P_K$, and non-empty intersections of facets are \textit{faces\/} of~$P_K$. The simple cell~$P_K$ has the following properties that mimic properties of simple polytopes: 
\begin{itemize}
\item $P_K$ is PL homeomorphic to the $n$-dimensional simplex;
\item $\partial P_K=F_1\cup\ldots\cup F_m$; 
\item $F_{i_1}\cap\ldots\cap F_{i_k}$ is PL homeomorphic to the $(n-k)$-dimensional simplex if the  vertices $v_{i_1},\ldots, v_{i_k}$ span a $(k-1)$-dimensional simplex of~$K$, and $F_{i_1}\cap\ldots\cap F_{i_k}$ is empty if the vertices $v_{i_1},\ldots, v_{i_k}$ do not span a simplex of~$K$. 
\end{itemize}
The described constructions $P\mapsto K_P$ and $K\mapsto P_K$ are mutually inverse and yield a one-to-one correspondence between $n$-dimensional simple cells and $(n-1)$-dimensional combinatorial spheres. 
The constructions of the manifolds~$\U_P$, $\RZ_P$, and~$M_{P,\lambda}$ are immediately extended to the case of an arbitrary simple cell~$P$.

\begin{remark}\label{remark_pol_comp}
Indeed, the requirement that $K$ is a combinatorial sphere is not needed to construct~$P_K$. This construction can be applied to an arbitrary simplicial complex~$K$, and $P_K$ is called a \textit{simple polyhedral complex\/} dual to~$K$ (see~\cite{Dav87},~\cite{DaJa91a}). The spaces~$\U_{P_K}$, $\RZ_{P_K}$, and~$M_{P_K,\lambda}$ can be constructed for an arbitrary simple polyhedral complex~$P$. However, generally they are not manifolds.  Still, they are homology manifolds if $K$ is a homology sphere, and they are pseudo-manifolds if~$K$ is a pseudo-manifold. 
\end{remark}

Let us fix terminology and notation concerning simplicial complexes and simple cells. First, we always work with \textit{geometric\/} simplcial complexes. Second, if~$K$ is a simplicial complex, we use the same notation~$K$ (instead of~$|K|$) for the underlying topological space of~$K$. This causes no ambiguity unless we write~$x\in K$. We make a convention that notation~$x\in K$ always means that $x$ is a point of the underlying space of~$K$ and we never use notation~$x\in K$ to indicate that $x$ is a simplex of~$K$. The simplex with vertices $u_1,\ldots,u_k$ will be denoted by $[u_1\ldots u_k]$. If $K$ is a combinatorial sphere and $P$ is the dual simple cell, we always identify with each other the  topological spaces $\partial P$, $K'$, and~$K$. Now, let $K_1$ and $K_2$ be combinatorial spheres and let $P_1$ and $P_2$ be the dual simple cells respectively. A simplicial mapping $\varphi:K_1\to K_2$ induces the simplicial mapping $\varphi':K_1'\to K_2'$ and the mapping $\bar{\varphi}=\cone(\varphi'):P_1\to P_2$.
After identifying $\partial P_i$ with $K_i$, $i=1,2$, the mapping $\bar{\varphi}|_{\partial P_1}$ will coincide with the original mapping $\varphi:K_1\to K_2$. Let $u$ and $v$ be vertices of~$K_1$ and $K_2$ respectively and let $F$ and $G$ be the corresponding facets of~$P_1$ and $P_2$ respectively. It is easy to see that $\bar{\varphi}(F)\subset G$ whenever $\varphi(u)=v$.

Let $P_1$ and $P_2$ be simple cells of the same dimension and let $f:P_1\to P_2$ be a mapping such that $f(\partial P_1)\subset \partial P_2$. The  \textit{degree\/} of~$f$ is, by definition, the degree of the induced mapping $P_1/\partial P_1\to P_2/\partial P_2$.

\section{URC small covers}
\label{section_urc_sc}

By definition, the \textit{permutahedron\/} $\Pi^n\subset \R^{n+1}$ is the convex hull of $(n+1)!$ points $(j_1,j_2,\ldots,j_{n+1})$ such that $j_1,j_2,\ldots,j_{n+1}$ is a permutation of the set $1,2,\ldots,n+1$. It can be easily seen that $\Pi^n$ is a simple polytope in the hyperplane $\sum_{i=1}^{n+1}t_i=\frac{(n+1)(n+2)}{2}$, where $t_1,\ldots,t_n$ are the standard coordinates in~$\R^{n+1}$. The permutahedron $\Pi^n$ has $2^{n+1}-2$ facets~$F_{\omega}$, which are conveniently indexed by non-empty proper subsets $\omega\subset[n+1]$. (A subset $\omega\subset[n+1]$ is called \textit{proper\/} if $\omega\ne[n+1]$.) The facet $F_{\omega}$ is given by the equation
$$
\sum_{i\in\omega}t_i=\sum_{j=n-|\omega|+2}^{n+1}j=\frac{(n+1)(n+2)}{2}-\frac{(n-|\omega|+1)(n-|\omega|+2)}{2}
$$
where $|\omega|$ is the cardinality of $\omega$. It is easy to see that facets $F_{\omega_1}$ and~$F_{\omega_2}$ have non-empty intersection if and only if either $\omega_1\subset\omega_2$ or $\omega_2\subset\omega_1$. This implies that the simplicial complex~$K_{\Pi^n}$ is isomorphic to the barycentric subdivision of the boundary of the $n$-simplex.

Colouring each facet~$F_{\omega}$ in colour~$|\omega|$, we obtain a regular colouring of facets of~$\Pi^n$ in $n$ colours $1,\ldots,n$. Hence the small cover~$M_{\Pi^n}$ induced from a linear model is well defined. The following theorem is a reformulation of Theorem~\ref{theorem_M0}.

\begin{theorem}\label{theorem_MPi}
The manifold~$M_{\Pi^n}$ is a URC-manifold.
\end{theorem}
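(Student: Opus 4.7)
My plan is to verify the URC property directly by reducing to virtual domination. By Thom's theorem, every class $z\in H_n(X,\Z)$ has a non-zero integer multiple $kz$ realized as $g_*[N^n]$ for a smooth oriented closed manifold $N^n$ and a continuous map $g\colon N^n\to X$. Hence it suffices to produce, for each such $N^n$, a finite-sheeted covering $\hM\to M_{\Pi^n}$ together with a continuous map $h\colon \hM\to N^n$ of non-zero degree: the composition $g\circ h$ then realizes the non-zero multiple $k\cdot\deg(h)\cdot z$.

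To build such an $h$, I would exploit the fact that $K_{\Pi^n}$ is exactly the barycentric subdivision of $\partial\Delta^n$, the coloring of each vertex being given by the cardinality $|\omega|$ of the corresponding non-empty proper subset $\omega\subset[n+1]$. Choose a smooth triangulation~$T$ of~$N^n$. Its barycentric subdivision~$T'$ carries an analogous coloring: each vertex of~$T'$ is the barycenter of a simplex of~$T$, and its color is the dimension of that simplex. For each top-dimensional simplex of~$T$, this matching provides a local model, namely a map from a copy of~$\Pi^n$ to the corresponding region of~$N^n$ that respects the coloring of facets. Gluing these local models along shared facets in accordance with the combinatorics of~$T$ yields a ``permutahedral'' cell complex~$\Sigma$ together with a continuous map $\Sigma\to N^n$ of non-zero degree.

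The crucial step is to identify $\Sigma$ with a finite-sheeted covering of $M_{\Pi^n}$. Since the universal cover $\U_{\Pi^n}$ is glued of copies of $\Pi^n$ indexed by $W_{\Pi^n}$, one expects $\Sigma$ to arise as a quotient $\U_{\Pi^n}/H$ for a subgroup $H\subset W_{\Pi^n}$ determined by the combinatorial data of~$T$. For the domination map to exist as a map from a finite cover of $M_{\Pi^n}$, the subgroup $H$ must have finite index in $W_{\Pi^n}$ and must be commensurable with $\ker(\lambda_*\eta)\subset W_{\Pi^n}$. Establishing this finiteness and commensurability is the main obstacle, and I anticipate that the author's constructions in Sections~\ref{section_group_constr} and~\ref{section_Pi_constr} are designed precisely to overcome it: the abstract group construction of Section~\ref{section_group_constr} encodes the combinatorial gluing, while Section~\ref{section_Pi_constr} verifies that, specifically for the permutahedron, the subgroups produced are of finite index in $W_{\Pi^n}$ and interact correctly with the characteristic function~$\lambda$, so that the resulting map descends to a finite-sheeted cover $\hM\to M_{\Pi^n}$.
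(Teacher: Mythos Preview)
Your high-level intuition is sound --- barycentric subdivision, the dimension-based colouring, and modelling each top simplex by a permutahedron are indeed the right ingredients --- but the proposal contains a genuine gap at exactly the point you flag, and the paper's construction resolves it in a way that is \emph{not} what you describe.

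The complex~$\Sigma$ you propose, with one permutahedron per $n$-simplex of~$T'$ glued according to the combinatorics of~$T$, is simply \emph{not} a covering of~$M_{\Pi^n}$ in general. In~$M_{\Pi^n}$ (and in any covering of it) two permutahedra meeting along a facet~$F_{\omega}$ are related by a reflection in the group~$\Z_2^n$; in your~$\Sigma$ the identifications are dictated by the arbitrary triangulation~$T$, and there is no reason these should be compatible with the~$\Z_2^n$-structure. So there is no subgroup~$H$ with $\Sigma\cong\U_{\Pi^n}/H$, and the ``commensurability'' question you pose is not the right one.

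What the paper actually does is quite different. It works directly with a pseudo-manifold~$Z$ representing the class (no appeal to Thom's theorem), passes to its barycentric subdivision, and then \emph{chooses} for every proper face~$\tau$ of~$Z$ an arbitrary pairing~$\lambda_{\tau}$ between the white and black $n$-simplices containing~$\tau$. These pairings assemble into permutations~$\lambda_{\omega}$ of the set~$A$ of top simplices, one for each proper nonempty $\omega\subset[n+1]$. The group-theoretic lemma of Section~\ref{section_group_constr} then constructs a map $\theta:W_{\Pi^n}\to\F$ (not a homomorphism!) into the free product of the~$\Z_2$'s generated by the~$\lambda_{\omega}$, with the crucial property that $\theta(s_{\omega}g)$ differs from $x_{\omega}\theta(g)$ only by conjugation in~$\F_{>\omega}$. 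This is what makes the map
\[
f:\U_{\Pi^n}\to Z,\qquad [p,g]\mapsto[\pi(p),\theta(g)\cdot\sigma_0]
\]
well defined. Invariance under a \emph{finite-index} subgroup~$W_H$ (where $H$ is the stabiliser of~$\sigma_0$ under the~$\F$-action on the finite set~$A$) then lets~$f$ descend to a map from a finite covering~$\hM_{\Pi^n}$ onto~$Z$. The resulting covering has many permutahedra over each simplex of~$Z$ --- the degree is $|W:\Gamma_H|/|A|$ --- so it is not your one-permutahedron-per-simplex complex~$\Sigma$ at all. The real content lies in the pairings~$\lambda_{\tau}$ and the non-homomorphic transfer~$\theta$, neither of which appears in your outline.
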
 

This theorem was proved by the author~\cite{Gai08a},~\cite{Gai08b} (see also~\cite{Gai08c} for further applications of this construction). In sections~\ref{section_group_constr} and~\ref{section_Pi_constr} we shall give a proof of Theorem~\ref{theorem_MPi}. Actually, this is the same proof as the proof given in~\cite{Gai08a} and~\cite{Gai08b}. Nevertheless, in the present paper we give a new exposition of this proof using a new group-theoretic lemma, which essentially clarifies the idea behind the construction.   

For each characteristic function~$\lambda$, $\RZ_P$ is a finite-sheeted covering of~$M_{P,\lambda}$. Hence $M_{P,\lambda}$ is a URC-manifold if and only if $\RZ_P$ is a URC-manifold. In particular, we see that $\RZ_{\Pi^n}$ is a URC-manifold and $M_{\Pi^n,\lambda}$ is a URC-manifold for every characteristic function~$\lambda$. We shall give several sufficient conditions for a manifold~$\RZ_P$ to be a URC-manifold. To formulate the result we need several definitions. 

A simplicial complex~$K$ is said to be a \textit{flag complex\/} if a set of vertices of~$K$ spans a simplex whenever these vertices are pairwise joined by edges. Another  is that $K$ contains ``no empty simplices'' or satisfies ``no-$\triangle$-condition'' (see~\cite{Gro87}). 
Further, we shall say that $K$ does not contain an \textit{empty $4$-circuit\/} if for any pairwise distinct vertices $u_1,u_2,u_3,u_4$ such that  $[u_1u_2]$, $[u_2u_3]$, $[u_3u_4]$, and $[u_4u_1]$ are edges of~$K$, at least one of the diagonals~$[u_1u_3]$ and~$[u_2u_4]$ is an edge of~$K$. In Gromov's terminology~\cite{Gro87} this condition is called ``Siebenmann's no-$\square$-condition''.

Let $K$ be an $(n-1)$-dimensional combinatorial sphere and let $0<\varepsilon<\pi$. We say that $K$ is \textit{$\varepsilon$-fine\/} if there exists a mapping $K\to \bS^{n-1}$ of non-zero degree such that the diameter of the image of every simplex of~$K$ is less than~$\varepsilon$. A simple cell $P$ is said to be \textit{$\varepsilon$-fine\/} if $K_P$ is $\varepsilon$-fine.

\begin{theorem}\label{theorem_sc}
Let $P$ be an $n$-dimensional simple cell. Assume that $P$ satisfies one of the following conditions:
\begin{itemize}
\item[(a)] There exists a simplicial mapping $f:K_P\to (\partial \Delta^n)'$ of non-zero degree, where $(\partial \Delta^n)'$ is the barycentric subdivision of the boundary of the $n$-dimensional simplex.
\item[(b)] $K_P$ is isomorphic to the barycentric subdivision of a combinatorial sphere~$L$.
\item[(c)] $P$ is $\varepsilon_n$-fine, where \begin{equation*}
\varepsilon_n=\arccos\left(1-\frac{12}{n(n+1)(n+2)}\right).
\end{equation*}
\item[(d)] $P\subset \Lob^n$ is a simple convex polytope whose interior contains a closed ball of radius 
\begin{equation*}
\rho_n=
\log\left(\sqrt{\frac{n(n+1)(n+2)}{6}}+
\sqrt{\frac{n(n+1)(n+2)}{6}-1}\right).
\end{equation*} 
\item[(e)] $K_P$ is a flag combinatorial sphere without an empty $4$-cir\-cuit, i.\,e., $K_P$ satisfies the ``no-$\triangle$-condition'' and the ``no-$\square$-condition''.
\end{itemize}
Then $\RZ_P$ is a URC-manifold. Therefore, $M_{P,\lambda}$ is a URC-manifold for every characteristic function~$\lambda$.  
\end{theorem}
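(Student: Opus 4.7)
My plan is to take (a) as the main case, deduce (b), (c), (d) from it, and handle (e) via the separate equivariant-negative-curvature criterion announced earlier in this section. Throughout, the aim is to show that $\RZ_P$ (virtually) dominates $M_{\Pi^n}$; since $M_{\Pi^n}$ is URC by Theorem~\ref{theorem_MPi}, Proposition~\ref{propos_obvious}(3) then yields URC for $\RZ_P$, and consequently for every $M_{P,\lambda}$.

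\textit{The main case} (a). Write $\omega_i=f(v_i)$ and $d=\deg f\ne 0$. The induced map $\bar f\colon P\to\Pi^n$ has degree $d$ and sends each facet $F_i\subset P$ into the facet $F_{\omega_i}\subset\Pi^n$, so the homomorphism $f_\sharp\colon\Z_2^{m}\to\Z_2^{2^{n+1}-2}$, $a_i\mapsto a_{\omega_i}$, is compatible with the equivalence relations defining $\RZ_P$ and $\RZ_{\Pi^n}$, and $[p,g]\mapsto[\bar f(p),f_\sharp(g)]$ gives a continuous map $F\colon\RZ_P\to\RZ_{\Pi^n}$. Composing with the covering projection $\RZ_{\Pi^n}\to M_{\Pi^n}$, which is the quotient by $\ker\lambda_*$ for the surjection $\lambda_*\colon a_\omega\mapsto b_{|\omega|}$, produces $\widetilde F\colon\RZ_P\to M_{\Pi^n}$. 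Because $d\ne 0$, at least one top simplex of $K_P$ maps non-degenerately onto a top simplex of $K_{\Pi^n}$, whose vertices realise all cardinalities $1,\dots,n$; hence $\lambda_*\circ f_\sharp$ is surjective. Counting preimages of a generic interior point of a top cell of $M_{\Pi^n}$ shows that $\deg\widetilde F=d\cdot 2^{m-n}\ne 0$, so $\RZ_P\geqslant M_{\Pi^n}$, which suffices.

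\textit{Reductions.} For (c)$\Rightarrow$(a), I would realise $(\partial\Delta^n)'$ inside $\bS^{n-1}$ as the barycentric subdivision of the regular $n$-simplex inscribed in the sphere; a direct spherical computation---which is exactly what pins down the constant $\varepsilon_n$---shows that the open star of every vertex of $(\partial\Delta^n)'$ has angular radius at least $\varepsilon_n$. An $\varepsilon_n$-fine map $K_P\to\bS^{n-1}$ then admits a simplicial approximation $K_P\to(\partial\Delta^n)'$, obtained by sending each vertex $v_i$ to a vertex of $(\partial\Delta^n)'$ whose open star contains its image; the smallness of simplex images forces the chosen vertices on any simplex of $K_P$ to lie in a common simplex of $(\partial\Delta^n)'$, and the approximation is homotopic to the original map and so retains its non-zero degree. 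For (d)$\Rightarrow$(c), central radial projection from the centre of the inscribed ball of radius $\rho_n$ sends $\partial P$ onto $\bS^{n-1}$, and the hyperbolic law of cosines converts the inradius bound $\rho_n$ into the simplex-diameter bound $\varepsilon_n$. For (b)$\Rightarrow$(a), starting from any PL homeomorphism $L\to\partial\Delta^n$ one constructs, by induction on the dimension of simplices, a rank-preserving labelling of the vertices of $L'$ (= simplices of~$L$) by non-empty proper subsets of~$[n+1]$ (= vertices of $(\partial\Delta^n)'$); this produces a simplicial map $L'\to(\partial\Delta^n)'$ homotopic to the original homeomorphism and so of non-zero degree.

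\textit{Case (e) and main obstacle.} Gromov's combinatorial curvature criterion applied to the flag no-$\square$ condition produces a piecewise-Euclidean, $W_P$-equivariant metric of non-positive Alexandrov curvature on the cubical complex dual to~$\U_P$; an equivariant local modification sharpens this to strict negative Alexandrov curvature with a smooth point, whence the URC criterion for equivariantly negatively curved small covers announced at the start of section~\ref{section_urc_sc} applies to $\RZ_P$. I expect the main difficulty to lie in case (a), namely verifying the compatibility of $f_\sharp$ with the gluings and the surjectivity of $\lambda_*\circ f_\sharp$ so that $\deg\widetilde F=d\cdot 2^{m-n}$; once that is in hand, (b), (c), (d) reduce to the geometric book-keeping around the explicit constants $\varepsilon_n$ and $\rho_n$, and (e) follows from the standalone negative-curvature criterion.
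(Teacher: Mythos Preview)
Your overall architecture matches the paper's: reduce everything to (a) via the chain (d)$\Rightarrow$(c)$\Rightarrow$(a) and (b)$\Rightarrow$(a), handle (e) by the separate $\CAT(-1)$ criterion, and settle (a) by building an explicit non-zero-degree map $\RZ_P\to\RZ_{\Pi^n}$ from the simplicial map on nerves (the paper stops there; your extra composition to $M_{\Pi^n}$ is harmless). Your treatment of~(a) is essentially the paper's Proposition~\ref{propos_simp_map}, and your announced ``main obstacle'' is misplaced: compatibility of $f_\sharp$ with the gluings and surjectivity are the routine parts.

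The real work, which you underweight, is in the geometry, and two points need correction.

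For (c)$\Rightarrow$(a) the paper does \emph{not} embed $(\partial\Delta^n)'$ via the regular inscribed simplex and run ordinary simplicial approximation. It introduces the dual notion of an \emph{$\varepsilon$-sparse} simple cell and shows, by radially projecting $\partial\Pi^n$ to~$\bS^{n-1}$, that $\Pi^n$ is $\varepsilon_n$-sparse: images of non-intersecting facets sit at spherical distance $\ge\varepsilon_n$. The specific value $\varepsilon_n=\arccos\bigl(1-12/(n(n+1)(n+2))\bigr)$ falls out of the circumradius $R_n=\sqrt{n(n+1)(n+2)/12}$ of~$\Pi^n$ and an elementary estimate using indices $j\in\omega_1\setminus\omega_2$, $k\in\omega_2\setminus\omega_1$. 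Your assertion that open stars in the regular-simplex realisation of $(\partial\Delta^n)'$ have angular radius $\ge\varepsilon_n$, ``which is exactly what pins down the constant'', is a different computation that you have not carried out; there is no a~priori reason it yields the same~$\varepsilon_n$. If you keep your formulation you must redo the spherical calculation for that embedding; otherwise, switch to the paper's sparse/fine pairing (Propositions~\ref{propos_finesparse} and~\ref{propos_permutohedron}).

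For (e) the step ``piecewise-Euclidean non-positive curvature, then an equivariant local modification sharpens this to strict negative curvature'' is not how Gromov's criterion works and is not a valid argument. What Gromov proves is that endowing every cube of the cubical decomposition~$\U_P^*$ with the metric of a regular \emph{hyperbolic} cube of a suitable edge length yields a $W_P$-invariant, piecewise-hyperbolic metric that is $\CAT(-1)$ precisely when $K_P$ satisfies both the no-$\triangle$ and the no-$\square$ conditions. One then applies Theorem~\ref{theorem_CAT(-1)} directly. Flagness alone gives only $\CAT(0)$ with Euclidean cubes, and there is no ``local modification'' turning a $\CAT(0)$ metric into a $\CAT(-1)$ one; the no-$\square$ condition is exactly the extra combinatorial input that makes the hyperbolic-cube metric negatively curved.
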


The assertion that each manifold~$\RZ_P$ satisfying condition~(x) is a URC-manifold will be refered as Theorem~\ref{theorem_sc}(x), where x is one of the letters~a, b, c, d, and~e.
Conditions~(a), (b), and (e) are purely combinatorial. Notice, however, that condition~(a) is harder to check than conditions~(b) and~(e). Conditions~(c) and (d)  have more geometric nature. We shall see that (d)$\Rightarrow$(c)$\Rightarrow$(a) and (b)$\Rightarrow$(a). Hence all assertions of Theorem~\ref{theorem_sc} except for~(e) will follow from assertion~(a). 

The proof of Theorem~\ref{theorem_sc}(e) uses theory of $\CAT(\kappa)$ spaces. (Good references for this subject are~\cite{Gro87} and~\cite{BrHa99}; for applications to manifolds~$\U_P$, see~\cite{Dav02}.) Recall the definition of a $\CAT(-1)$ space. Let $X$ be a complete geodesic metric space. For every geodesic triangle~$xyz$ in $X$, a \textit{comparison triangle\/} is a triangle~$x^*y^*z^*$ in~$\Lob^2$ with the same edge lengths. Let $p$ and $q$ be arbitrary points in the boundary of~$xyz$ and let~$p^*$ and~$q^*$ be the corresponding points in the boundary of~$x^*y^*z^*$. The $\CAT(-1)$ \textit{inequality\/} is the inequality $\dist_{X}(p,q)\le\dist_{\Lob^2}(p^*,q^*)$. A space~$X$ is said to be a $\CAT(-1)$ \textit{space\/} if the $\CAT(-1)$ inequality is satisfied for every geodesic triangle~$xyz$ and every points~$p$ and~$q$ in its boundary. A space~$X$ is said to have \textit{curvature\/} $K\le -1$ if the $\CAT(-1)$ inequality is satisfied locally, i.\,e., in a neighbourhood of every point. Replacing in these definitions the Lobachevsky plane~$\Lob^2$ with the Euclidean plane~$\R^2$ one will obtain a definitions of a $\CAT(0)$ space and of a space of curvature $K\le 0$. 

\begin{theorem}\label{theorem_CAT(-1)}
Assume that the manifold~$\RZ_P$ admits a $\Z_2^m$-invariant metric of curvature $K\le -1$ such that there exists a point $o\in \RZ_P$ whose neighbourhood is isometric to an open subset of~$\Lob^n$. Then $\RZ_P$ is a URC-manifold.
\end{theorem}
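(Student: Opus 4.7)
The plan is to reduce Theorem~\ref{theorem_CAT(-1)} to Theorem~\ref{theorem_sc}(e), by showing that the hypotheses force $K_P$ to be a flag combinatorial sphere with no empty $4$-circuit. First I would lift the $\Z_2^m$-invariant metric from $\RZ_P$ to a $W_P$-invariant metric on the universal cover $\U_P$, which inherits curvature $K\le -1$ locally. Since $\U_P$ is simply connected, the Cartan--Hadamard theorem in its $\CAT(-1)$ form yields that $\U_P$ is globally $\CAT(-1)$, in particular globally $\CAT(0)$ and contractible. Thus $\RZ_P$ is aspherical, and by Davis's theorem (recalled in Section~\ref{section_sc}) contractibility of $\U_P$ is equivalent to $K_P$ being a flag complex, which is the no-$\triangle$-condition.

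For the no-$\square$-condition I would argue by contradiction. Suppose $K_P$ contains an empty $4$-circuit with vertices $v_{i_1},v_{i_2},v_{i_3},v_{i_4}$. Then the reflections $s_{i_1},s_{i_2},s_{i_3},s_{i_4}\in W_P$ commute along the cycle but not across the diagonals. A direct check using these commutation relations shows that $g=s_{i_1}s_{i_3}$ and $h=s_{i_2}s_{i_4}$ commute in $W_P$ and each has infinite order, being the translation generator of an infinite dihedral subgroup $\langle s_{i_1},s_{i_3}\rangle\cong\Z_2*\Z_2$ (and similarly for $h$). Since $\eta(g^2)=(a_{i_1}a_{i_3})^2=1$ in $\Z_2^m$ and similarly for $h^2$, the squares $g^2,h^2$ lie in $\ker\eta=\pi_1(\RZ_P)$ and generate a free abelian subgroup of rank~$2$. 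The group $\pi_1(\RZ_P)$ acts freely, properly discontinuously, and cocompactly by isometries on the $\CAT(-1)$ space $\U_P$, so all infinite-order elements act as axial isometries. By the Flat Torus Theorem, the $\Z^2$ subgroup then stabilizes an isometrically embedded Euclidean plane $\R^2\hookrightarrow\U_P$, which contradicts the $\CAT(-1)$ property of $\U_P$ (any flat triangle in such a plane violates the strict comparison inequality). Hence $K_P$ has no empty $4$-circuit, and Theorem~\ref{theorem_sc}(e) gives that $\RZ_P$ is a URC-manifold.

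The role of the hyperbolic point $o$, which enters the above chain only implicitly, is to guarantee that the curvature-$\le-1$ condition has its full geometric meaning: the neighborhood isometric to an open subset of $\Lob^n$ forces $\U_P$ to be a genuinely $n$-dimensional $\CAT(-1)$ geodesic space (rather than a degenerate metric on the manifold), so that comparison arguments in $\Lob^2$ are non-vacuous and the flat plane produced by the Flat Torus Theorem is a real $2$-flat. The hard part of the plan is the careful invocation of the Flat Torus Theorem: one must verify that $g^2$ and $h^2$, as commuting semisimple hyperbolic isometries, admit parallel invariant axes spanning a genuine $2$-flat, and then rule this out using the strict negativity of curvature. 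With this geometric input in place, the reduction to the purely combinatorial Theorem~\ref{theorem_sc}(e) completes the proof.
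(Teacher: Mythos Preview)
Your reduction is circular. In this paper, Theorem~\ref{theorem_sc}(e) is not proved independently: it is deduced \emph{from} Theorem~\ref{theorem_CAT(-1)} via Gromov's observation that the no-$\triangle$ and no-$\square$ conditions allow one to put a piecewise-hyperbolic $W_P$-invariant $\CAT(-1)$ metric on the cubical decomposition~$\U_P^*$ (see the paragraph immediately after the statement of Theorem~\ref{theorem_CAT(-1)}, and the opening of Section~\ref{section_proof_sc}). Your argument, taken at face value, establishes the reverse implication --- that a $W_P$-invariant $\CAT(-1)$ metric on~$\U_P$ forces condition~(e) --- and therefore only closes the loop of equivalences between the \emph{hypotheses} of the two statements. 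It supplies no independent route to the URC conclusion. Your remark about the point~$o$ confirms this: in your argument $o$ plays no role, whereas in the paper it is essential.

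The paper's proof is completely different and directly constructive. One works in the $\CAT(-1)$ universal cover~$\U_P$ and enlarges the fundamental chamber~$P$ to a domain~$P_1$ obtained by intersecting the half-spaces bounded by all mirrors at distance greater than some large~$\rho$ from the hyperbolic point~$o$. The $\CAT(-1)$ comparison inequality, applied to geodesic triangles with a vertex at~$o$, shows that every facet of~$P_1$ projects radially to a subset of small diameter on the unit sphere in~$T_o\U_P$ (this is exactly where the hyperbolic neighbourhood of~$o$ is used). From this one builds by hand a nonzero-degree map $\RZ_{P_1}\to M_{\Pi^n}$, mimicking Propositions~\ref{propos_simp_map}, \ref{propos_finesparse}, and~\ref{propos_rho}, and concludes since $\RZ_{P_1}$ and~$\RZ_P$ share a common finite-sheeted cover. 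Condition~(e) never enters.

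Two smaller points. First, the converse of Davis's theorem you invoke (contractibility of~$\U_P$ implies $K_P$ is flag) is not the statement recalled in Section~\ref{section_sc}; only the forward implication is stated there, so you would need to justify or cite the converse separately. Second, your Flat Torus argument for no-$\square$ is correct (the four reflections generate $(\Z_2*\Z_2)\times(\Z_2*\Z_2)$, and $g^2,h^2\in\ker\eta$ span a $\Z^2$), but, again, even granting it you have only shown that the hypotheses of Theorem~\ref{theorem_CAT(-1)} imply those of Theorem~\ref{theorem_sc}(e), not the theorem itself.
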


The universal covering of a space of curvature $K\le -1$ is a $\CAT(-1)$ space. Hence  $\RZ_P$ has a $\Z_2^m$-invariant metric of curvature $K\le -1$ if and only if $\U_P$ has a $W_P$-invariant $\CAT(-1)$ metric. The condition of the existence of a point~$o$ with a ``standard'' neighbourhood is technical and probably can be avoided.

The standard way to construct a metric on~$\U_P$ is as follows. The manifold $\U_P$ has a natural cell decomposition with $n$-dimensional cells isomorphic to $P$. The dual cell decomposition~$\U_P^*$ is a cubical decomposition (see section~\ref{section_sc}). Now, choose an $a>0$ and endow every cube of~$\U_P^*$ with the metric of a regular cube in~$\Lob^n$ of edge length~$a$. Gromov~\cite{Gro87} proved that the obtained metric is $\CAT(-1)$ for some $a>0$ if and only if a simple cell~$P$ satisfies condition~(e) of Theorem~\ref{theorem_sc}, that is, satisfies the ``no-$\triangle$-condition'' and the ``no-$\square$-condition''. This $\CAT(-1)$ metric on~$\U_P$ is $W_P$-invariant. Besides, it is \textit{piecewise hyperbolic\/}, since every cell of~$\U_P^*$ is isometric to a cube in~$\Lob^n$. Hence Theorem~\ref{theorem_sc}(e)  immediately follows from Theorem~\ref{theorem_CAT(-1)}.

\begin{remark}
The condition $K\le -1$ in Theorem~\ref{theorem_CAT(-1)} cannot be replaced by a condition $K\le 0$. Gromov~\cite{Gro87} showed that the manifold~$\U_P$ has a $W_P$-invariant $\CAT(0)$ metric whenever~$P$ is a flag simple cell, i.\,e., whenever~$K_P$ satisfies the ``no-$\triangle$-condition''. For example, the direct product $P_1\times P_2$ of any flag simple cells is a flag simple cell. Hence the manifold $\RZ_{P_1\times P_2}$ has a $\Z_2^m$-invariant metric of curvature $K\le 0$. However, a result of Kotschick and L\"oh~\cite{KoLo08} implies that the manifold $\RZ_{P_1\times P_2}=\RZ_{P_1}\times\RZ_{P_2}$ cannot be a URC-manifold if both cells~$P_1$ and~$P_2$ have positive dimensions. Nevertheless, the following conjecture seems to be reasonable.    
\end{remark}

\begin{conj}
Suppose $P$ is a simple cell that is not combinatorially equivalent to a direct product of two simple cells of positive dimensions. Then $\RZ_P$ is a URC-manifold.
\end{conj}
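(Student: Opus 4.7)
My plan is to try to reduce the conjecture to a suitably strengthened, ``virtual'' version of Theorem~\ref{theorem_sc}(a). Specifically, I would aim to show that for every indecomposable simple cell~$P$, after passing to a finite combinatorial covering $\hat K\to K_P$ compatible with some covering $\RZ_P\to M_{P,\lambda}$, there exists a simplicial mapping $\hat K\to(\partial\Delta^n)'$ of non-zero degree. Combined with the constructions developed in sections~\ref{section_proof_sc}--\ref{section_Pi_constr}, such a virtual analogue of condition~(a) ought to imply URC for~$\RZ_P$, since the URC property for~$\RZ_P$ reduces to a property of coverings of~$K_P$.

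The first step is to reformulate indecomposability combinatorially: $P\cong P_1\times P_2$ with both factors of positive dimension if and only if $K_P\cong K_{P_1}\ast K_{P_2}$ (simplicial join); equivalently, the vertex set of~$K_P$ admits a bipartition into non-empty parts such that every vertex in one part is joined by an edge of $K_P$ to every vertex in the other. Indecomposability thus corresponds to connectedness of the ``non-adjacency graph'' on the vertices of~$K_P$, a strong structural condition that I would attempt to exploit in order to construct the required map to $(\partial\Delta^n)'$. I would then proceed by induction on the number of facets of~$P$, using Theorem~\ref{theorem_hyp} and the various cases of Theorem~\ref{theorem_sc} as the anchor. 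For the inductive step, a natural move is to find a vertex $v$ of $K_P$ whose link $\mathrm{lk}(v,K_P)$ is an indecomposable combinatorial sphere of dimension $n-2$, invoke URC for the corresponding $(n-1)$-dimensional small cover by induction, and then attempt a gluing argument along the lines of section~\ref{section_Pi_constr} to assemble these local data into a non-zero degree map on a covering of~$\RZ_P$. A complementary route would be to show that some iterated barycentric subdivision of $K_P$ (perhaps on an equivariant cover) brings $P$ into the regime of Theorem~\ref{theorem_sc}(b).

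The chief obstacle will be the treatment of non-flag indecomposable cells. For such $P$ the universal cover~$\U_P$ fails to be aspherical, so the $\CAT(-1)$ framework of Theorem~\ref{theorem_CAT(-1)} does not apply, and the direct barycentric reduction to~(b) is likewise unavailable. Moreover, the constructions of sections~\ref{section_proof_sc}--\ref{section_Pi_constr} rely crucially on the existence of a simplicial map to $(\partial\Delta^n)'$, which is not supplied a~priori by the hypotheses of the conjecture. A secondary subtlety is that combinatorial coverings of~$K_P$ may themselves introduce new join-like structure and spoil indecomposability at an intermediate stage of the induction, so the reduction must carefully preserve the hypothesis. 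I expect that the conjecture in full generality ultimately requires a genuinely new construction --- perhaps a ``universal indecomposable'' small cover playing a role analogous to~$M_{\Pi^n}$ in Theorem~\ref{theorem_MPi}, but adapted to arbitrary non-product cells --- rather than just a strengthening of the sufficient conditions in Theorem~\ref{theorem_sc}.
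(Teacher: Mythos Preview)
The statement you are attempting to prove is a \emph{conjecture} in the paper, not a theorem: the paper offers no proof of it whatsoever, so there is nothing to compare your proposal against. Your own text acknowledges as much in its final paragraph, where you correctly anticipate that ``the conjecture in full generality ultimately requires a genuinely new construction'' beyond what the paper supplies.

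That said, there is a concrete error in your outline worth flagging. You propose to pass to ``a finite combinatorial covering $\hat K\to K_P$ compatible with some covering $\RZ_P\to M_{P,\lambda}$''. But $K_P$ is an $(n-1)$-dimensional combinatorial \emph{sphere}, hence simply connected, and admits no nontrivial coverings; finite-sheeted coverings of~$\RZ_P$ (equivalently, finite-index subgroups of $\ker\eta\subset W_P$) are not organised by coverings of~$K_P$ at all. The relevant combinatorial object attached to a covering $\hR\to\RZ_P$ is rather a larger simple cell~$P_1$ (a fundamental domain for a subgroup $W_1\subset W_P$), as in the proofs of Theorems~\ref{theorem_hyp} and~\ref{theorem_CAT(-1)}; one then needs a non-zero degree map $K_{P_1}\to(\partial\Delta^n)'$, and there is no evident mechanism by which indecomposability of~$P$ alone would produce such a map on some~$P_1$. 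Your inductive scheme via links also faces the difficulty that links in an indecomposable sphere need not themselves be indecomposable, so the hypothesis is not stable under the proposed reduction. In short, the proposal is a reasonable sketch of where the difficulties lie, but it is not a proof, and the paper does not claim one.
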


Let us show that Theorem~\ref{theorem_CAT(-1)} implies Theorem~\ref{theorem_hyp}. Indeed, let $W\subset\Isom(\Lob^n)$ be a uniform right-angular reflection group and let $P$ be a fundamental domain for~$W$. Then $\U_P=\Lob^n$ has a $W$-invariant metric of constant curvature~$-1$. Theorem~\ref{theorem_CAT(-1)} yields that $\RZ_P$ is a URC-manifold. Now, for a torsion-free subgroup~$\Gamma\subset W$, the manifolds~$\RZ_P$ and~$\Lob^n/\Gamma$ possess a common finite-sheeted covering. Hence $\Lob^n/\Gamma$ is a URC-manifold. 

\begin{remark}
It is reasonable to ask whether the obtained URC-manifolds are smoothable. Indeed, if $P$ is not only a simple cell, but a simple polytope in either~$\R^n$ or~$\Lob^n$, then $P$ has a natural structure of smooth manifold with corners. Hence the manifold~$\RZ_P$ has a $\Z_2^m$-equivariant smooth structure, see~\cite{Dav83}.
\end{remark}

\begin{remark}
The proofs of Theorems~\ref{theorem_sc} and~\ref{theorem_CAT(-1)} that will be given in the next two sections will never use that $K_P$ is a combinatorial sphere. Actually, we might take for $P$ a  simple polyhedral complex dual to an arbitrary oriented pseudo-manifold~$K$ (see Remark~\ref{remark_pol_comp}). Then the analogues of Theorems~\ref{theorem_sc} and~\ref{theorem_CAT(-1)} would assert that the pseudo-manifold~$\RZ_P$ satisfies the URC-condition, though we would not be able to guarantee that $\RZ_P$ is a manifold.
\end{remark}

\section{A group-theoretic construction}
\label{section_group_constr}
Let $\Omega$ be a finite partially ordered set and let $m=|\Omega|$ be its cardinality. We shall write $\omega_1\nless\omega_2$ to indicate that either $\omega_1\ge\omega_2$ or $\omega_1$ and $\omega_2$ are incomparable. Let $\F$ be the free product of $m$ copies of the group~$\Z_2$
indexed by the elements of the set~$\Omega$. The generator of the factor~$\Z_2$ corresponding to an element~$\omega\in\Omega$ will be denoted by~$x_{\omega}$. Thus,
$$
\F=\langle x_{\omega},\omega\in\Omega\mid x_{\omega}^2=1\rangle.
$$
For each $\omega\in\Omega$, we consider the automorphism $\psi_{\omega}\in\Aut(\F)$ such that
$$
\psi_{\omega}(x_{\gamma})=\left\{
\begin{aligned}
x_{\omega}&x_{\gamma}x_{\omega}&&\text{if}\ \gamma<\omega,\\
&x_{\gamma}&&\text{if}\ \gamma\nless\omega
\end{aligned}
\right.
$$ 
for every $\gamma\in\Omega$. We denote by $\Psi$ the subgroup of~$\Aut(\F)$ generated by the elements~$\psi_{\omega}$, $\omega\in\Omega$.
Now, we consider the semi-direct product $\F\rtimes\Psi$ corresponding to the tautological action of~$\Psi\subset\Aut(\F)$ on~$\F$ and we consider the elements $s_{\omega}=x_{\omega}\psi_{\omega}\in \F\rtimes\Psi$. Denote by $S$ the set consisting of $m$ elements~$s_{\omega}$ and by~$W$ the subgroup of~$\F\rtimes\Psi$ generated by the elements~$s_{\omega}$. Obviously, $\psi_{\omega}^2=1$ and $s_{\omega}^2=1$ for every $\omega\in\Omega$.

\begin{propos}
Suppose~$\omega_1<\omega_2$; then $\psi_{\omega_1}\psi_{\omega_2}= \psi_{\omega_2}\psi_{\omega_1}$ and $s_{\omega_1}s_{\omega_2}=s_{\omega_2}s_{\omega_1}$.
\end{propos}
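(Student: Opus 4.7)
The plan is to establish the first identity $\psi_{\omega_1}\psi_{\omega_2}=\psi_{\omega_2}\psi_{\omega_1}$ by direct verification on each free generator $x_\gamma$, and then to deduce the second identity algebraically in the semi-direct product $\F\rtimes\Psi$, using the first identity together with the fact that $\psi_{\omega_i}$ fixes $x_{\omega_j}$ whenever $\omega_i<\omega_j$.

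For the first identity, I would fix $\gamma\in\Omega$ and split into cases according to how $\gamma$ compares with $\omega_1$ and $\omega_2$. The key observation is that because $\omega_1<\omega_2$, transitivity forces $\gamma<\omega_1\Rightarrow\gamma<\omega_2$, while on the other hand $\omega_2\nless\omega_1$. This asymmetry is precisely what is needed to keep the cases under control. In the representative case $\gamma<\omega_1$, one computes
\begin{equation*}
\psi_{\omega_2}\psi_{\omega_1}(x_\gamma)=\psi_{\omega_2}(x_{\omega_1}x_\gamma x_{\omega_1})=(x_{\omega_2}x_{\omega_1}x_{\omega_2})(x_{\omega_2}x_\gamma x_{\omega_2})(x_{\omega_2}x_{\omega_1}x_{\omega_2})=x_{\omega_2}x_{\omega_1}x_\gamma x_{\omega_1}x_{\omega_2},
\end{equation*}
after collapsing the interior $x_{\omega_2}^2=1$, and the same word is obtained from $\psi_{\omega_1}\psi_{\omega_2}(x_\gamma)=\psi_{\omega_1}(x_{\omega_2}x_\gamma x_{\omega_2})$, noting that $\psi_{\omega_1}$ leaves $x_{\omega_2}$ untouched since $\omega_2\nless\omega_1$. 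The remaining cases — $\gamma=\omega_1$, $\gamma=\omega_2$, $\gamma<\omega_2$ with $\gamma\not<\omega_1$ and $\gamma\ne\omega_1$, and $\gamma$ unrelated to both — are all easier: in each of them at least one of the two automorphisms fixes $x_\gamma$, and the other acts the same way before and after. Going through them is routine, and I would just tabulate the outcomes.

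Once $\psi_{\omega_1}\psi_{\omega_2}=\psi_{\omega_2}\psi_{\omega_1}$ is established, the second identity is a short manipulation in $\F\rtimes\Psi$. Writing $s_{\omega_i}=x_{\omega_i}\psi_{\omega_i}$ and using the semi-direct product rule $\psi\cdot x=\psi(x)\cdot\psi$, together with $\psi_{\omega_1}(x_{\omega_2})=x_{\omega_2}$ (as $\omega_2\nless\omega_1$) and $\psi_{\omega_2}(x_{\omega_1})=x_{\omega_2}x_{\omega_1}x_{\omega_2}$, one gets
\begin{equation*}
s_{\omega_1}s_{\omega_2}=x_{\omega_1}x_{\omega_2}\psi_{\omega_1}\psi_{\omega_2},\qquad s_{\omega_2}s_{\omega_1}=x_{\omega_2}(x_{\omega_2}x_{\omega_1}x_{\omega_2})\psi_{\omega_2}\psi_{\omega_1}=x_{\omega_1}x_{\omega_2}\psi_{\omega_2}\psi_{\omega_1},
\end{equation*}
and these two expressions agree by the first identity.

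The only mild obstacle is bookkeeping in the case analysis for the $\psi$-identity, since one must remember that the partial order may leave $\gamma$ incomparable with $\omega_1$ or $\omega_2$; the relation $\nless$ was introduced exactly to handle that. I expect no genuine difficulty beyond carefully running through the subcases.
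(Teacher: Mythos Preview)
Your proposal is correct and follows essentially the same approach as the paper: a direct case-by-case verification of $\psi_{\omega_1}\psi_{\omega_2}=\psi_{\omega_2}\psi_{\omega_1}$ on generators, followed by the same semi-direct product computation for the $s_{\omega_i}$. The only cosmetic difference is that the paper condenses your five subcases into three (according to whether $\gamma<\omega_1$, $\gamma\nless\omega_1$ and $\gamma<\omega_2$, or $\gamma\nless\omega_2$), which already absorb the boundary cases $\gamma=\omega_1$ and $\gamma=\omega_2$.
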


\begin{proof}
Suppose $\gamma\in\Omega$. By a direct computation, we obtain that 
$$
\psi_{\omega_1}(\psi_{\omega_2}(x_{\gamma}))= \psi_{\omega_2}(\psi_{\omega_1}(x_{\gamma}))=
\left\{
\begin{aligned}
x_{\omega_2}x_{\omega_1}&x_{\gamma} x_{\omega_1}x_{\omega_2}&&\text{if}\ \gamma<\omega_1,\\
x_{\omega_2}&x_{\gamma} x_{\omega_2}&&\text{if}\ \gamma\nless\omega_1\ \text{and}\  
\gamma<\omega_2,\\
&x_{\gamma}&&\text{if}\ \gamma\nless\omega_2.
\end{aligned}
\right.
$$
Hence, the automorphisms~$\psi_{\omega_1}$ and~$\psi_{\omega_2}$ commute. Now, we have
\begin{gather*}
s_{\omega_1}s_{\omega_2}= x_{\omega_1}\psi_{\omega_1} x_{\omega_2}\psi_{\omega_2}=
x_{\omega_1}x_{\omega_2}\psi_{\omega_1} \psi_{\omega_2},\\
s_{\omega_2}s_{\omega_1}= x_{\omega_2}\psi_{\omega_2} x_{\omega_1}\psi_{\omega_1}=
x_{\omega_2}(x_{\omega_2}x_{\omega_1}x_{\omega_2})
\psi_{\omega_2} \psi_{\omega_1}= x_{\omega_1}x_{\omega_2}\psi_{\omega_1} \psi_{\omega_2}.
\end{gather*}
Therefore, $s_{\omega_1}s_{\omega_2}=s_{\omega_2}s_{\omega_1}$.
\end{proof}

\begin{propos}\label{propos_Coxeter}
$(W,S)$ is a right-angular Coxeter system such that generators~$s_{\omega_1}$ and $s_{\omega_2}$ commute if and only if~$\omega_1$ and $\omega_2$ are comparable. 
\end{propos}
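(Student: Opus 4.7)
I would proceed in two stages, treating the commutation characterisation first and then the Coxeter structure.

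\textbf{Commutation characterisation.} The ``if'' direction---comparable $\omega_1,\omega_2$ give commuting $s_{\omega_1},s_{\omega_2}$---is the content of the preceding proposition together with the trivial case $\omega_1=\omega_2$. For the ``only if'' direction, I would argue that when $\omega_1$ and $\omega_2$ are incomparable, the conditions $\omega_2\nless\omega_1$ and $\omega_1\nless\omega_2$ give $\psi_{\omega_1}(x_{\omega_2})=x_{\omega_2}$ and $\psi_{\omega_2}(x_{\omega_1})=x_{\omega_1}$, so the semi-direct product multiplication yields
\[
s_{\omega_1}s_{\omega_2}=x_{\omega_1}x_{\omega_2}\psi_{\omega_1}\psi_{\omega_2},\qquad
s_{\omega_2}s_{\omega_1}=x_{\omega_2}x_{\omega_1}\psi_{\omega_2}\psi_{\omega_1}.
\]
Since $x_{\omega_1}x_{\omega_2}$ and $x_{\omega_2}x_{\omega_1}$ are distinct reduced words in the free product $\F$, these two elements of $\F\rtimes\Psi$ differ, so $s_{\omega_1}$ and $s_{\omega_2}$ do not commute.

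\textbf{Coxeter structure.} Having verified $s_\omega^2=1$ and $s_{\omega_1}s_{\omega_2}=s_{\omega_2}s_{\omega_1}$ for comparable pairs, there is a canonical surjection $\rho\colon\widetilde W\to W$ from the abstract right-angular Coxeter group $\widetilde W$ on $S$ with edges between comparable elements. I would first nail down the pairwise structure in $W$. In the comparable case $\{s_{\omega_1},s_{\omega_2}\}$ generates $\Z_2\times\Z_2$; nontriviality of the product is witnessed by the abelianisation $W\to\Z_2^m$ induced by $\F\to\F/[\F,\F]$, which sends $s_{\omega_1}s_{\omega_2}$ to $e_{\omega_1}+e_{\omega_2}\ne 0$. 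In the incomparable case I would prove by induction on $n$ that the $\F$-component of $(s_{\omega_1}s_{\omega_2})^n$ equals the reduced word $(x_{\omega_1}x_{\omega_2})^n\in\F$; the inductive step reduces to the observation that $\psi_{\omega_1}\psi_{\omega_2}$ fixes both $x_{\omega_1}$ and $x_{\omega_2}$. Consequently $s_{\omega_1}s_{\omega_2}$ has infinite order and $\langle s_{\omega_1},s_{\omega_2}\rangle$ is the infinite dihedral group.

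\textbf{Main obstacle.} Upgrading this pairwise information to injectivity of $\rho$ is the hard step, since in principle $W$ could satisfy relations among three or more generators not consequential to the pairwise ones. My preferred route is induction on $|\Omega|$: pick a maximal element $\omega_0$ and let $\Omega'=\Omega\setminus\{\omega_0\}$. Every $\psi_\gamma$ with $\gamma\in\Omega'$ fixes $x_{\omega_0}$ (because $\omega_0\not<\gamma$), so the subgroup of $\Psi$ generated by such $\psi_\gamma$'s preserves $\F'=\langle x_\gamma:\gamma\in\Omega'\rangle$ and acts on $\F'$ exactly as $\Psi'$; consequently the subgroup $W_1\subset W$ generated by $\{s_\gamma:\gamma\in\Omega'\}$ embeds into $\F'\rtimes\Psi'$, and by induction is the right-angular Coxeter group on $\Omega'$. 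The remaining, delicate step is to show that adjoining $s_{\omega_0}$ introduces only the prescribed commutations with $s_\gamma$, $\gamma<\omega_0$, and no further relations. I would handle this either by exhibiting $W$ as an HNN/amalgam decomposition over $W_1$, or by constructing a faithful reflection representation that matches the Tits representation of the abstract Coxeter group on $\Omega$.
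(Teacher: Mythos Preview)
Your commutation characterisation and pairwise analysis are correct but largely unnecessary, and your proposal stalls precisely at the step that matters. You acknowledge this yourself: the induction on $|\Omega|$ reduces to showing that adjoining $s_{\omega_0}$ to $W_1$ adds no relations beyond the prescribed commutations, and you leave this as ``either an HNN/amalgam decomposition or a faithful reflection representation'' without carrying out either. Both routes would require real work---the amalgam structure is not obvious, and matching to the Tits representation would essentially be re-proving faithfulness of the latter in this special case.

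The paper's argument bypasses all of this with a one-line retraction. Define $j\colon\F\to\underline W$ by $j(x_\omega)=r_\omega$, where $\underline W$ is the abstract right-angular Coxeter group on $\Omega$ with edges between comparable elements. The point is that $j$ is $\Psi$-invariant: each $\psi_\omega$ replaces $x_\gamma$ by $x_\omega x_\gamma x_\omega$ only when $\gamma<\omega$, and in that case $r_\omega r_\gamma r_\omega=r_\gamma$ in $\underline W$. Hence $j$ extends to a homomorphism $j'\colon\F\rtimes\Psi\to\underline W$ by $j'(x\psi)=j(x)$, and $j'\circ i=\mathrm{id}_{\underline W}$ where $i\colon\underline W\to\F\rtimes\Psi$ sends $r_\omega\mapsto s_\omega$. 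Injectivity of $i$ follows immediately, and with it the full Coxeter structure---including the non-commutation of incomparable generators, which is then a standard fact about right-angular Coxeter groups and does not need your separate infinite-order computation.
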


\begin{proof}
We have already proved that the elements~$s_{\omega}$ satisfy relations $s_{\omega}^2=1$ and $s_{\omega_1}s_{\omega_2}=s_{\omega_2}s_{\omega_1}$ whenever $\omega_1<\omega_2$. We need to prove that these relations imply all relations among the elements~$s_{\omega}$. Let $\underline{W}$ be the right-angular Coxeter group  generated by the elements~$r_{\omega}$, $\omega\in\Omega$, with all relations among them following from the relations $r_{\omega}^2=1$ and $r_{\omega_1}r_{\omega_2}=r_{\omega_2}r_{\omega_1}$, $\omega_1<\omega_2$. Then we have a well-defined homomorphism $i:\underline{W}\to \F\rtimes\Psi$ such that $i(r_{\omega})=s_{\omega}$. We need to prove that $i$ is injective.  Consider the homomorphism 
$j:\F\to \underline{W}$ given by $j(x_{\omega})=r_{\omega}$. Obviously, the homomorphism~$j$ is $\Psi$-invariant. Hence, there exists a well-defined homomorphism $j':\F\rtimes\Psi\to\underline{W}$ such that $j'(x\psi)=j(x)$ for every $x\in\F$ and every $\psi\in\Psi$. Since the composite homomorphism
$$
\underline{W}\xrightarrow{i}\F\rtimes\Psi \xrightarrow{j'}\underline{W}
$$
is the identity isomorphism, we obtain that $i$ is injective. 
\end{proof}

\begin{remark}
The above construction has the following generalization. Let $\mathbb{F}$ be the free group with generators~$x_{\omega}$, $\omega\in\Omega$. Consider the automorphisms $\psi_{\omega}\in\Aut(\mathbb{F})$ such that 
$$
\psi_{\omega}(x_{\gamma})=\left\{
\begin{aligned}
x_{\omega}&x_{\gamma}x_{\omega}^{-1}&&\text{if}\ \gamma<\omega,\\
&x_{\gamma}&&\text{if}\ \gamma\nless\omega
\end{aligned}
\right.
$$
for every $\gamma\in\Omega$, and consider the elements $y_{\omega}=x_{\omega}^{-1}\psi_{\omega}\in \mathbb{F}\rtimes\Aut(\mathbb{F})$. Then $y_{\omega_1}y_{\omega_2}=y_{\omega_2}y_{\omega_1}$ and these relations imply all relations among the elements~$y_{\omega}$. Therefore, the subgroup $A\subset \mathbb{F}\rtimes\Aut(\mathbb{F})$ generated by the elements~$y_{\omega}$ is the right-angular Artin group. The proof is similar to the proof of Proposition~\ref{propos_Coxeter}.
\end{remark}

For~$\omega\in\Omega$, let $\F_{>\omega}\subset \F$ be the subgroup generated by all elements $x_{\omega'}$ such that $\omega'>\omega$ and let $\F_{\ge\omega}\subset \F$ be the subgroup generated by all elements $x_{\omega'}$ such that $\omega'\ge\omega$. The definition of the automorphisms~$\psi_{\omega}$ immediately implies

\begin{propos}\label{lem_>omega}
The subgroups~$\F_{>\omega}$ and $\F_{\ge\omega}$ are $\Psi$-invariant. For each $\psi\in\Psi$, we have $\psi(x_{\omega})=yx_{\omega}y^{-1}$ for a $y\in \F_{>\omega}$. 
\end{propos}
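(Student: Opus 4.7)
The plan is to verify everything by checking what the generators $\psi_\tau$, $\tau\in\Omega$, do, and then bootstrapping from those generators to arbitrary $\psi\in\Psi$. Since $\Psi$ is generated by the involutions $\psi_\tau$, a subgroup $H\subset\F$ is $\Psi$-invariant as soon as each $\psi_\tau$ sends $H$ into itself (the inverse containment is automatic because $\psi_\tau^{-1}=\psi_\tau$), so it suffices to check that $\psi_\tau$ maps each generator $x_{\omega'}$ of $\F_{>\omega}$ (respectively $\F_{\ge\omega}$) back into that subgroup.

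For $\F_{>\omega}$, take $\omega'>\omega$ and apply $\psi_\tau$. If $\omega'\nless\tau$, then $\psi_\tau(x_{\omega'})=x_{\omega'}$, which lies in $\F_{>\omega}$ by hypothesis. If $\omega'<\tau$, then $\psi_\tau(x_{\omega'})=x_\tau x_{\omega'}x_\tau$; by transitivity $\omega<\omega'<\tau$, so $x_\tau\in\F_{>\omega}$ as well, and the whole conjugate sits in $\F_{>\omega}$. The verification for $\F_{\ge\omega}$ is identical, once one observes that if $\omega'\ge\omega$ and $\omega'<\tau$, then $\omega<\tau$, so $x_\tau\in\F_{>\omega}\subset\F_{\ge\omega}$.

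For the conjugation statement I would argue by induction on the length of a word in the generators $\psi_\tau$ representing $\psi$. The base case $\psi=\psi_\tau$ is immediate from the definition: either $\omega\nless\tau$ and $\psi_\tau(x_\omega)=x_\omega$ (take $y=1$), or $\omega<\tau$ and $\psi_\tau(x_\omega)=x_\tau x_\omega x_\tau$ (take $y=x_\tau\in\F_{>\omega}$). For the inductive step, suppose $\psi(x_\omega)=yx_\omega y^{-1}$ and $\psi'(x_\omega)=y'x_\omega y'^{-1}$ with $y,y'\in\F_{>\omega}$; then
\[
(\psi\psi')(x_\omega)=\psi(y')\,\psi(x_\omega)\,\psi(y')^{-1}=\bigl(\psi(y')y\bigr)x_\omega\bigl(\psi(y')y\bigr)^{-1}.
\]
By the invariance proved in the first part, $\psi(y')\in\F_{>\omega}$, hence $\psi(y')y\in\F_{>\omega}$ as required.

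There is no real obstacle: the only thing to keep an eye on is the transitivity step $\omega<\omega'<\tau\Rightarrow\omega<\tau$, which is what makes the subgroup $\F_{>\omega}$ (rather than just the individual generator $x_\omega$) behave well under the conjugations produced by the $\psi_\tau$. Once that observation is made, both assertions reduce to routine checks on generators and a one-line induction.
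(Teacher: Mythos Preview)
Your argument is correct. The paper itself does not give a detailed proof of this proposition, stating only that it ``immediately'' follows from the definition of the automorphisms~$\psi_\omega$; your write-up is exactly the routine verification on generators together with the obvious induction that the paper leaves to the reader, so the approaches coincide.
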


Now, we consider the mapping $\F\rtimes\Psi\to \F$ such that $\psi x\mapsto x$ for every $x\in \F$ and every $\psi\in\Psi$, and we denote by~$\theta$ the restriction of this mapping to~$W$. The mapping~$\theta:W\to \F$ is not a homomorphism, but it satisfies the following property. 

\begin{propos}\label{propos_theta}
Suppose $g\in W$ and $\omega\in\Omega$; then there exists $y\in \F_{>\omega}$ such that
$$
\theta(s_{\omega}g)=yx_{\omega}y^{-1}\theta(g).
$$
\end{propos}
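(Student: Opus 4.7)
The plan is to unpack the definition of $\theta$ by writing $g$ in the unique normal form $g=\psi x$ with $\psi\in\Psi$ and $x\in\F$ (so that $\theta(g)=x$), compute $\theta(s_\omega g)$ directly, and then invoke Proposition~\ref{lem_>omega}.

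For the computation, note first that $s_\omega=x_\omega\psi_\omega=\psi_\omega x_\omega$, since $\psi_\omega(x_\omega)=x_\omega$ (because $\omega\nless\omega$). Therefore
\[
s_\omega g = \psi_\omega x_\omega\cdot\psi x,
\]
and the universal identity $x_\omega\psi=\psi\cdot\psi^{-1}(x_\omega)$ valid in any semidirect product lets me push $x_\omega$ past~$\psi$, giving the normal form
\[
s_\omega g = (\psi_\omega\psi)\bigl(\psi^{-1}(x_\omega)\,x\bigr).
\]
Reading off the $\F$-component yields $\theta(s_\omega g)=\psi^{-1}(x_\omega)\,\theta(g)$.

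The final step is to apply Proposition~\ref{lem_>omega} to the automorphism $\psi^{-1}\in\Psi$: it produces an element $y\in\F_{>\omega}$ with $\psi^{-1}(x_\omega)=yx_\omega y^{-1}$. Substituting into the previous formula gives $\theta(s_\omega g)=yx_\omega y^{-1}\,\theta(g)$, which is the assertion.

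I do not expect any real obstacle, as essentially all of the content is already packaged in Proposition~\ref{lem_>omega}. The only point requiring care is the convention for the semidirect product and the direction of the $\Psi$-action on~$\F$: one must use the normal form $\psi x$ in which $\theta(\psi x)=x$, so that pushing $x_\omega$ across $\psi$ produces $\psi^{-1}(x_\omega)$ (and not $\psi(x_\omega)$); once this convention is fixed, the proposition is an immediate corollary of Proposition~\ref{lem_>omega}.
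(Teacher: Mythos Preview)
Your argument is correct and follows the same route as the paper's own proof: write $g=\psi x$, compute $s_{\omega}g=\psi_{\omega}\psi\cdot\psi^{-1}(x_{\omega})\,x$ so that $\theta(s_{\omega}g)=\psi^{-1}(x_{\omega})\theta(g)$, and then invoke Proposition~\ref{lem_>omega}. The only difference is cosmetic: you spell out the identity $s_{\omega}=\psi_{\omega}x_{\omega}$ and the semidirect-product commutation rule explicitly, whereas the paper simply records the resulting normal form.
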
 

\begin{proof}
Suppose $g=\psi x$, where $x\in \F$ and $\psi\in\Psi$; then $\theta(g)=x$. We have, 
$s_{\omega}g=\psi_{\omega}\psi zx$, where $z=\psi^{-1}(x_{\omega})$. Hence, $\theta(s_{\omega}g)=zx=z\theta(g)$. By Proposition~\ref{lem_>omega}, we have $z=yx_{\omega}y^{-1}$ for a $y\in \F_{>\omega}$. 
\end{proof}

Now, let $H$ be a subgroup of finite index in~$\F$. Denote by $\Psi_H$ the subgroup of~$\Psi$ consisting of all automorphisms~$\psi$ such that $\psi(x)H=xH$ for every $x\in\F$. In particular, $\psi(H)=H$ for every $\psi\in \Psi_H$, hence, the semi-direct product $H\rtimes \Psi_H$ is well defined.
It can be easily proved that the subgroup~$\Psi_H$ has finite index in~$\Psi$. We put, $W_H=W\cap(H\rtimes \Psi_H)$. Then $W_H$ is a subgroup of finite index in~$W$.

\begin{propos}\label{propos_thetaH}
Let $g_1$ and~$g_2$ be elements of~$W$ such that $g_1W_H=g_2W_H$. Then $\theta(g_1)H=\theta(g_2)H$. Consequently, the mapping~$\theta$ induces a well-defined mapping $\theta_H:W/W_H\to \F/H$. 
\end{propos}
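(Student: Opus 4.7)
The plan is to unpack the definitions into the normal form of the semidirect product and then observe that the ``defining features'' of $W_H$---lying in $H$ on the $\F$-coordinate and in $\Psi_H$ on the $\Psi$-coordinate---are exactly what is needed to force $\theta(g_1)$ and $\theta(g_2)$ to lie in the same $H$-coset.

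First I would recall that every element of $\F \rtimes \Psi$ admits a unique normal form $\psi x$ with $\psi \in \Psi$ and $x \in \F$, and that $\theta(\psi x) = x$ by definition. Suppose $g_1 W_H = g_2 W_H$, so $g_1 = g_2 h$ for some $h \in W_H \subset H \rtimes \Psi_H$. Decomposing $g_2 = \psi_2 y_2$ and $h = \psi_h y_h$ in normal form, the inclusion $h \in H \rtimes \Psi_H$ forces $\psi_h \in \Psi_H$ and $y_h \in H$. Commuting $y_2$ past $\psi_h$ via the identity $y_2 \psi_h = \psi_h \psi_h^{-1}(y_2)$ yields
\[
g_1 \;=\; \psi_2 y_2 \psi_h y_h \;=\; (\psi_2 \psi_h)\bigl(\psi_h^{-1}(y_2)\, y_h\bigr),
\]
which is itself in normal form. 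Therefore $\theta(g_1) = \psi_h^{-1}(y_2)\, y_h$ while $\theta(g_2) = y_2$, so the conclusion reduces to showing $\psi_h^{-1}(y_2) \in y_2 H$.

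Finally I would check the small lemma that $\Psi_H$ is a subgroup of $\Psi$: closure under products is immediate from the definition, and closure under inversion follows by substituting $\psi^{-1}(x)$ for $x$ in $\psi(x) H = x H$. Thus $\psi_h^{-1} \in \Psi_H$, so by definition $\psi_h^{-1}(y_2) H = y_2 H$, and combined with $y_h \in H$ this gives $\theta(g_1) \in y_2 H \cdot H = y_2 H = \theta(g_2) H$. The only thing requiring genuine care is the semidirect-product bookkeeping in the second step; conceptually, the subgroup $\Psi_H$ is defined precisely so as to make $\theta$ descend to a well-defined map $\theta_H:W/W_H \to \F/H$, and this proposition just confirms that the definition does its job.
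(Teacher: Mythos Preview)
Your proof is correct and follows essentially the same route as the paper's: both write the elements in the normal form $\psi x$, compute the product in the semidirect product to get $\theta(g_1)=\psi_h^{-1}(y_2)\,y_h$ (the paper's $\varphi^{-1}(x)h$), and then use the defining property of~$\Psi_H$ together with $y_h\in H$ to conclude. The only difference is cosmetic---you swap the roles of $g_1$ and $g_2$ and spell out explicitly that $\Psi_H$ is closed under inversion, whereas the paper leaves that implicit.
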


\begin{proof}
We have, $g_2=g_1g$ for some $g\in H\rtimes \Psi_H$. Suppose, $g_1=\psi x$ and $g=\varphi h$, where $x\in \F$, $\psi\in\Psi$, $h\in H$, and $\varphi\in \Psi_H$. Then $\theta(g_1)=x$ and $\theta(g_2)=\varphi^{-1}(x)h$. Since $h\in H$ and $\varphi\in \Psi_H$, we see that $\theta(g_1)H=\theta(g_2)H$.
\end{proof}

\section{The manifold $M_{\Pi^n}$ is URC}
\label{section_Pi_constr}

In this section we use the group-theoretic construction described in section~\ref{section_group_constr} to prove Theorem~\ref{theorem_MPi}. Since arcwise connected components of~$X$ can be treated separately, we may assume that $X$ is  arcwise connected.

Recall that a simplicial complex~$Z$ is called an $n$-dimensional \textit{pseudo-manifold} if 
\begin{enumerate}
\item each simplex of~$Z$ is contained in an $n$-dimensional simplex;
\item each $(n-1)$-dimensional simplex of~$Z$ is contained in exactly two $n$-dimensional simplices.
\end{enumerate}
All pseudo-manifolds under consideration are supposed to be compact and \textit{strongly connected\/}. The latter means that any two $n$-dimensional simplices of~$Z$ can be connected by a sequence of $n$-dimensional simplices such that every two consecutive simplices have a common $(n-1)$-dimensional face.  

The following proposition follows easily from the definition of singular homology groups.

\begin{propos}
Let $X$ be an arcwise connected topological space and let $z\in H_n(X,\Z)$ be an arbitrary homology class. Then there exist a strongly connected oriented $n$-dimensional pseudo-manifold~$Z$ and a continuous mapping $f:Z\to X$ such that $f_*[Z]=z$.
\end{propos}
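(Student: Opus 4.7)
The plan is to realise $z$ by a singular cycle and then geometrically glue the simplices of the cycle along the face-pairings forced by the boundary cancellation; achieving strong connectedness is then arranged by a surgery step that uses arcwise connectedness of~$X$.

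First, choose a singular cycle representing $z$ in the form $c=\sum_{i=1}^{N}\varepsilon_i\sigma_i$ with $\varepsilon_i\in\{-1,+1\}$ and $\sigma_i\colon\Delta^n\to X$ (split each $n_i\sigma_i$ in a general cycle $\sum n_i\sigma_i$ into $|n_i|$ copies of sign $\operatorname{sgn}(n_i)$). Form the disjoint union of oriented copies $\Delta^n_i$ of the standard $n$-simplex, the orientation of $\Delta^n_i$ being prescribed by $\varepsilon_i$. The vanishing $\partial c=0$ says that for each singular $(n-1)$-simplex $\tau\colon\Delta^{n-1}\to X$ that occurs among the $d_k\sigma_i$, the signed multiplicities $\varepsilon_i(-1)^k$ at its occurrences sum to zero. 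Hence the $(n+1)N$ codimension-one boundary faces can be partitioned into pairs of equal singular image and opposite induced boundary orientation, and I glue each such pair via the canonical order-preserving affine identification of $\Delta^{n-1}$ with itself. Taking a barycentric subdivision to resolve any vertex clashes, I obtain an oriented $n$-dimensional simplicial pseudo-manifold $Z_0$ (each $(n-1)$-simplex lies in exactly two $n$-simplices by construction) and a continuous map $f_0\colon Z_0\to X$ assembled from the $\sigma_i$, with $(f_0)_*[Z_0]=z$.

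The pseudo-manifold $Z_0$ need not be strongly connected, and making it so is the main technical issue. Write $Z_0=Z^{(1)}\sqcup\cdots\sqcup Z^{(k)}$ as its decomposition into strongly connected components; I reduce $k$ by one with the following cylinder surgery. Pick $n$-simplices $\Delta_j\subset Z^{(j)}$ ($j=1,2$) with barycentres $v_j$ and, using arcwise connectedness, a path $\gamma\colon[0,1]\to X$ from $f_0(v_1)$ to $f_0(v_2)$. Remove $\operatorname{int}\Delta_j$ from each $Z^{(j)}$ and glue in a triangulation of the tube $\partial\Delta^n\times[0,1]$ along the two $\partial\Delta^n$-boundaries. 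The resulting space $Z'$ is a strongly connected oriented $n$-pseudo-manifold with one fewer component. I extend the map over the tube by contracting $f_0|_{\partial\Delta_1}$ radially inside $\Delta_1$ to the constant $f_0(v_1)$ on $\partial\Delta^n\times[0,\tfrac13]$ via $(x,t)\mapsto f_0((1-3t)x+3tv_1)$, sliding along $\gamma$ on $\partial\Delta^n\times[\tfrac13,\tfrac23]$, and symmetrically expanding out to $f_0|_{\partial\Delta_2}$ on $\partial\Delta^n\times[\tfrac23,1]$.

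The crux is the verification that $f_*[Z']=z$. On the chain level, $[Z']-[Z_0]$, restricted to the two affected components, equals $\pm$ the class of the $n$-sphere $S^n=\Delta_1\cup_{\partial\Delta^n\times\{0\}}(\partial\Delta^n\times[0,1])\cup_{\partial\Delta^n\times\{1\}}\Delta_2$, carrying the induced map $\phi\colon S^n\to X$. By construction $\phi$ factors through the contractible space $\Delta_1\cup_{v_1}\gamma\cup_{v_2}\Delta_2$ (two closed $n$-discs joined at interior points by an arc), hence is null-homotopic, so $\phi_*[S^n]=0\in H_n(X)$ and $f_*[Z']=(f_0)_*[Z_0]=z$. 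Iterating the surgery until $k=1$ produces the required strongly connected pseudo-manifold $Z$ and map $f\colon Z\to X$.
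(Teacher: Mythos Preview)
Your argument is correct. The paper gives no proof of this proposition beyond the single line ``follows easily from the definition of singular homology groups,'' treating it as folklore; what you have written is a careful implementation of that folklore. The two-stage structure --- glue the simplices of a representing cycle along cancelling facets to obtain an oriented pseudo-manifold mapping to~$X$, then use tube surgeries along paths in~$X$ to achieve strong connectedness while preserving the image homology class --- is the standard way this is done, and your homological bookkeeping for the surgery (factoring the sphere map through the contractible space consisting of two discs joined by an arc) is clean.

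Two small imprecisions, neither of which affects the validity of the argument. First, a single barycentric subdivision of the glued $\Delta$-complex need not be a genuine simplicial complex (e.g.\ if some $\sigma_i$ has two coinciding faces that get paired with each other); saying ``after two barycentric subdivisions'' or ``after sufficiently many subdivisions'' is safe. Second, removing the top simplex~$\Delta_j$ can disconnect the dual graph of~$Z^{(j)}$; your claim that $Z'$ has one fewer strong component is still correct, but it is worth noting why: every strong component of $Z^{(j)}\setminus\{\Delta_j\}$ contains at least one neighbour of~$\Delta_j$ (look at the first return to~$\Delta_j$ along a chain witnessing strong connectivity of~$Z^{(j)}$), hence is attached to the tube, and for $n\geq 2$ the triangulated cylinder $\partial\Delta^n\times[0,1]$ is itself strongly connected.
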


Hence, to prove that $M_{\Pi^n}$ is a URC-manifold we suffice to show that for each strongly connected oriented $n$-dimensional pseudo-manifold~$Z$, a multiple of the fundamental class of~$Z$ can be realised as an image of the fundamental class of a finite-sheeted covering of~$M_{\Pi^n}$. We shall construct explicitly a finite-sheeted covering~$\hM_{\Pi^n}$ of~$M_{\Pi^n}$ and a continuous mapping $\hM_{\Pi^n}\to Z$ of non-zero degree. 

Replacing~$Z$ with its barycentric subdivision, we may assume that vertices of~$Z$ admit a regular colouring in~$n+1$ colours $1,\ldots,n+1$. (This means that any two vertices of~$Z$ connected by an edge are of distinct colours.) For an $n$-dimensional simplex~$\sigma$ of~$Z$, the colouring of its vertices in colours $1,\ldots,n+1$ provides the orientation of~$\sigma$. We colour the simplex~$\sigma$ in either white or black colour depending on whether this orientation of~$\sigma$ coincides with the global orientation of~$Z$ or not. Obviously, any two simplices with a common facet have distinct colours. Thus we obtain a chess colouring of $n$-dimensional simplices of~$Z$. 

We denote by $A$ the set of all $n$-dimensional simplices of~$Z$ and by~$A_+$ and~$A_-$ the sets of all white and black $n$-dimensional simplices of~$Z$ respectively. For a simplex $\tau$ of~$Z$, we denote by $A(\tau)$ the set of all $n$-dimensional simplices~$\sigma$ containing~$\tau$. We put $A_+(\tau)=A(\tau)\cap A_+$ and~$A_-(\tau)=A(\tau)\cap A_-$. 

\begin{propos}
For each simplex~$\tau$ of~$Z$ such that $\dim\tau<n$, the number of white $n$-dimensional simplices $\sigma$ containing~$\tau$ is equal to the number of black $n$-dimensional simplices $\sigma$ containing~$\tau$, i.\,e., $|A_+(\tau)|=|A_-(\tau)|$. 
\end{propos}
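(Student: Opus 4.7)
My plan is to prove the proposition by downward induction on $\dim \tau$, starting from $\dim \tau = n-1$. The base case requires exactly the chess-colouring property that the author has just derived from the orientability of $Z$: condition~(2) in the definition of a pseudo-manifold gives $|A(\tau)| = 2$, and since the two $n$-simplices in $A(\tau)$ share the facet $\tau$, they receive opposite colours, whence $|A_+(\tau)| = |A_-(\tau)| = 1$.

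For the inductive step, let $\dim \tau = k$ with $k < n-1$ and assume the equality for all simplices of dimension~$k+1$. I would double-count the set of pairs $(\tau',\sigma)$ with $\tau \subsetneq \tau' \subsetneq \sigma$, $\dim \tau' = k+1$, and $\sigma \in A_+(\tau)$. Fixing $\sigma$ first, the admissible $\tau'$ are in bijection with the $n-k$ vertices of $\sigma$ that lie outside $\tau$; fixing $\tau'$ first, the admissible $\sigma$ are exactly the elements of $A_+(\tau')$, since every $n$-simplex containing $\tau'$ automatically contains~$\tau$. Therefore
\[
(n-k)\,|A_+(\tau)|\;=\sum_{\substack{\tau'\supsetneq\tau\\ \dim\tau'=k+1}} |A_+(\tau')|,
\]
and the analogous identity holds with $+$ replaced by $-$. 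By the inductive hypothesis the two right-hand sums coincide, and since $n-k>0$, we conclude $|A_+(\tau)| = |A_-(\tau)|$ as required.

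There is no serious obstacle: the claim is a clean combinatorial consequence of orientability together with the pseudo-manifold axioms, and strong connectedness of $Z$ plays no role in the argument. The only place where geometric content really enters is the base case, where the fact that the colour-induced orientations on an $(n-1)$-face from its two neighbouring $n$-simplices agree while the globally induced orientations disagree is precisely what forces the alternation between white and black; this is already packaged into the chess-colouring property invoked above, so nothing further needs to be verified.
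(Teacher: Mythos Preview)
Your proof is correct and uses the same double-counting idea as the paper, but the paper's version is more direct: instead of inducting on $\dim\tau$ and passing through simplices of dimension $k+1$, the paper jumps straight to $(n-1)$-dimensional simplices~$\rho$ with $\tau\subset\rho\subset\sigma$. Counting such pairs $(\rho,\sigma)$ with $\sigma\in A_+(\tau)$ from the $\sigma$ side gives $(n-k)|A_+(\tau)|$, while counting from the $\rho$ side gives simply the number of $(n-1)$-simplices containing~$\tau$, since each such $\rho$ lies in exactly one white and one black $n$-simplex (your base case). This yields $(n-k)|A_+(\tau)|=(n-k)|A_-(\tau)|$ in one step, with no induction. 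Your argument unwinds to the same thing; the paper just observes that one may go all the way up to codimension~$1$ immediately rather than one dimension at a time.
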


\begin{proof}
Let $k=\dim\tau$. Each $n$-dimensional simplex~$\sigma\in A(\tau)$ contains exactly  $n-k$ simplices~$\rho$ such that $\dim\rho=n-1$ and $\rho\supset\tau$. On the other hand, every $(n-1)$-dimensional simplex~$\rho$ containing~$\tau$ is contained in exactly one simplex belonging to~$A_+(\tau)$ and in exactly one simplex belonging to~$A_-(\tau)$. Hence, the number of $(n-1)$-dimensional simplices~$\rho$ containing~$\tau$ is equal to both $(n-k)|A_+(\tau)|$ and $(n-k)|A_-(\tau)|$. Therefore, $|A_+(\tau)|=|A_-(\tau)|$.
\end{proof}

This proposition implies that we can pair off elements of the sets~$A_+(\tau)$ and~$A_-(\tau)$. For every~$\tau$, we choose arbitrarily such pairing~$\lambda_{\tau}$. Then $\lambda_{\tau}:A(\tau)\to A(\tau)$ is a bijection such that $\lambda_{\tau}(A_{\pm}(\tau))=A_{\mp}(\tau)$ and $\lambda_{\tau}^2=\mathrm{id}$. 

Consider a non-empty proper subset $\omega\subset[n+1]$. Denote by~$\T_{\omega}$ the set of all simplices~$\tau$ of~$Z$ such that
the set of colours of vertices of~$\tau$ coincides with~$\omega$. If $|\omega|=k$, then the set~$\T_{\omega}$ consists of $(k-1)$-dimensional simplices. Obviously, each simplex~$\sigma\in A$ contains exactly one simplex $\tau\in\T_{\omega}$, which will be called the \textit{face of~$\sigma$ of type~$\omega$\/}. Hence the set $A$ is the disjoint union of the sets~$A(\tau)$, $\tau\in\T_{\omega}$. Let $\lambda_{\omega}:A\to A$ be the bijection whose restriction to every set~$A(\tau)$ coincides with~$\lambda_{\tau}$.
Then $\lambda_{\omega}$ is a permutation of the set~$A$ satisfying the following conditions:
\begin{enumerate}
\item $\lambda_{\omega}(A_+)=A_-$ and $\lambda_{\omega}(A_-)=A_+$.
\item $\lambda_{\omega}^2=1$.
\item For each $\sigma\in A$, the simplices $\sigma$ and~$\lambda_{\omega}(\sigma)$ have common face of type~$\omega$.
\end{enumerate}

Let $\Delta^{n}\subset\R^{n+1}$ be the standard simplex with vertices 
$$e_1=(1,0,\ldots,0),\ e_2=(0,1,\ldots,0),\ \ldots,\  e_{n+1}=(0,0,\ldots,1).$$ For a non-empty proper subset $\omega\subset[n+1]$, we denote by $\Delta_{\omega}$ the face of~$\Delta^n$ with vertices~$e_i$ such that $i\in\omega$. For each simplex~$\sigma\in A$, let $\iota_{\sigma}:\Delta^n\to\sigma$ be the linear isomorphism taking every vertex~$e_i$ to the vertex of~$\sigma$ of colour~$i$. Obviously, $\iota_{\sigma}$ preserves the orientation if~$\sigma\in A_+$ and reverses the orientation if~$\sigma\in A_-$. We shall denote the point~$\iota_{\sigma}(p)$ by $[p,\sigma]$. Then
\begin{equation}\label{eq_lambda}
[p,\lambda_{\omega}(\sigma)]=[p,\sigma]
\end{equation}
whenever $p\in\Delta_{\omega}$.

Now, let $\Omega$ be the set of all non-empty proper subsets $\omega\subset[n+1]$ partially oredered by inclusion; then $m=|\Omega|=2^{n+1}-2$. We consider the construction in section~\ref{section_group_constr} for this~$\Omega$. 
Recall that facets~$F_{\omega}$ of~$\Pi^n$ are in one-to-one correspondence with subsets~$\omega\in\Omega$, and $F_{\omega_1}\cap F_{\omega_2}\ne\emptyset$ if and only if either $\omega_1\subset\omega_2$ or $\omega_2\subset\omega_1$. Hence the right-angular Coxeter group~$W$ corresponding to the partially ordered set~$\Omega$ coincides with the right-angular Coxeter group~$W_{\Pi^n}$.
We have $M_{\Pi^n}=\U_{\Pi^n}/\Gamma$, where $\Gamma$ is the kernel of the homomorphism $\rho:W\to\Z_2^n$ such that $\rho(s_{\omega})=b_{|\omega|}$, $\omega\in\Omega$. 

Let us construct a degree~$1$ mapping $\pi:\Pi^n\to\Delta^n$ such that $\pi(F_{\omega})\subset \Delta_{\omega}$ for every $\omega\in\Omega$. 
Map the barycentre of~$\Pi^n$ to the barycentre of~$\Delta^n$.
Each face $F$ of~$\Pi^n$ can be uniquely written as $F_{\omega_1}\cap\ldots\cap F_{\omega_k}$, $\omega_1\subset\cdots\subset\omega_k$. Map the barycentre of~$F$ to the barycentre of~$\Delta_{\omega_1}$.  Now the mapping~$\pi$ is defined on the vertices of the barycentric subdivision of~$\Pi^n$. Extend this mapping linearly to every simplex of the barycentric subdivision of~$\Pi^n$.

Denote by~$S_A$ the group of permutations of the set~$A$. Consider the homomorphism~$\Lambda:\F\to S_A$ given by $\Lambda(x_{\omega})=\lambda_{\omega}$. This homomorphism yields the action of the group~$\F$ on the set~$A$. Since $Z$ is strongly connected, this action is transitive. We shall write $g\cdot \sigma=\Lambda(g)(\sigma)$ for $g\in \F$, $\sigma\in A$. Choose a simplex~$\sigma_0\in A_+$ and define the mapping $f:\U_{\Pi^n}\to Z$ by
\begin{equation*}
f([p,g])=[\pi(p),\theta(g)\cdot\sigma_0].
\end{equation*}
Let $H\subset \F$ be the stabilizer of~$\sigma_0$. The subgroup~$H$ has finite index, since the set~$A$ is finite. Then~$W_H$ is a  subgroup of finite index in~$W$. Let $\Gamma_H=W_H\cap\Gamma$ and $\hM_{\Pi^n}=\U_{\Pi^n}/\Gamma_H$.

\begin{propos}
The mapping~$f$ is well defined and invariant under the right action of~$W_H$ on~$\U_{\Pi^n}$. Hence $f$ induces a well-defined mapping $f_1:\hM_{\Pi^n}\to Z$. The degree of~$f_1$ is equal to~$\frac{|W:\Gamma_H|}{|A|}$.
\end{propos}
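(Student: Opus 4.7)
The plan is to verify three assertions in sequence: the well-definedness of $f$ on $\U_{\Pi^n}$, its $W_H$-invariance (which gives the required $\Gamma_H$-invariance and descent to $f_1$), and the degree calculation.

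Since the equivalence relation on $P\times W$ is generated by $(p,g)\sim(p,s_\omega g)$ with $p\in F_\omega$, well-definedness reduces to showing $f([p,s_\omega g])=f([p,g])$ in each such case. Applying Proposition~\ref{propos_theta}, write $\theta(s_\omega g)=yx_\omega y^{-1}\theta(g)$ with $y\in\F_{>\omega}$, and set $\sigma=\theta(g)\cdot\sigma_0$. The key observation is that for any $\omega'>\omega$, the involution $\lambda_{\omega'}$ preserves the face of type $\omega$ of every $n$-simplex: indeed $\lambda_{\omega'}$ permutes simplices sharing a common face of type $\omega'$, and the face of type $\omega$ of such a simplex lies inside that common face and is determined by its vertex colors. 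Consequently $y^{-1}\cdot\sigma$, $x_\omega\cdot(y^{-1}\cdot\sigma)$, and $\sigma':=yx_\omega y^{-1}\cdot\sigma$ all share their face $\tau\in\T_\omega$ of type $\omega$ with $\sigma$. Since $p\in F_\omega$ implies $\pi(p)\in\Delta_\omega$, the linear isomorphisms $\iota_\sigma$ and $\iota_{\sigma'}$ restrict on $\Delta_\omega$ to the same map to $\tau$, so $[\pi(p),\sigma']=[\pi(p),\sigma]$ in $Z$. Next, for $w\in W_H$ one has $gwW_H=gW_H$, so by Proposition~\ref{propos_thetaH}, $\theta(gw)=\theta(g)h$ for some $h\in H$; as $H$ stabilizes $\sigma_0$ under $\Lambda$, $\theta(gw)\cdot\sigma_0=\theta(g)\cdot\sigma_0$, whence $f([p,gw])=f([p,g])$. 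Because $\Gamma_H\subset W_H$, $f$ descends to $f_1:\hM_{\Pi^n}\to Z$.

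For the degree, each top cell $gP$ of $\hM_{\Pi^n}$ is labeled by a coset $g\Gamma_H$. Since $\rho:W\to\Z_2^n$ sends each $s_\omega$ to a single basis vector, every element of $\Gamma=\ker\rho$ has even length in $W$; therefore the sign $\epsilon(g):=(-1)^{\ell(g)}$ is well-defined on $W/\Gamma_H$, and with cells oriented by the standard orientation of $\Pi^n$ the fundamental class reads $[\hM_{\Pi^n}]=\sum_{g\Gamma_H}\epsilon(g)[gP]$. On $gP$, $f_1$ equals $\iota_{\sigma_g}\circ\pi$ with $\sigma_g:=\theta(g)\cdot\sigma_0$; since $\pi$ has degree $1$ and $\iota_{\sigma_g}$ is orientation-preserving if and only if $\sigma_g\in A_+$, the contribution is $\operatorname{sgn}(\sigma_g)[\sigma_g]_Z$, where $\operatorname{sgn}(\sigma)=\pm 1$ for $\sigma\in A_\pm$ and $[\sigma]_Z$ denotes the $Z$-oriented fundamental class. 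The crux is the identity $\operatorname{sgn}(\sigma_g)=\epsilon(g)$, which I would prove by induction on $\ell(g)$: the base case $\sigma_e=\sigma_0\in A_+$ is immediate, and the step uses $\sigma_{s_\omega g}=yx_\omega y^{-1}\cdot\sigma_g$ with $y\in\F_{>\omega}$, together with the fact that each $\lambda_{\omega'}$ swaps $A_+$ and $A_-$: the conjugating pair $y,y^{-1}$ contributes an even number of color flips while the middle $x_\omega$ flips once, so $\operatorname{sgn}(\sigma_{s_\omega g})=-\operatorname{sgn}(\sigma_g)=\epsilon(s_\omega g)$. Combining, $(f_1)_*[\hM_{\Pi^n}]=\sum_{g\Gamma_H}\epsilon(g)\operatorname{sgn}(\sigma_g)[\sigma_g]_Z=\sum_{g\Gamma_H}[\sigma_g]_Z=\sum_{\sigma\in A}N_\sigma[\sigma]_Z$, where $N_\sigma=|\{g\Gamma_H:\sigma_g=\sigma\}|$. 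Being a top cellular cycle in the oriented $n$-pseudomanifold $Z$, whose top cycle group is generated by $[Z]=\sum_\sigma[\sigma]_Z$, this forces $N_\sigma$ to be a common value $\deg f_1$; summing over $\sigma$ yields $|A|\cdot\deg f_1=|W:\Gamma_H|$. The principal obstacle is the sign-tracking identity $\operatorname{sgn}(\sigma_g)=\epsilon(g)$, which links the combinatorial chess-colouring of $Z$ to the length-parity orientation of cells in the cover.
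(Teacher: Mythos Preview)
Your proof is correct and follows essentially the same route as the paper's: Proposition~\ref{propos_theta} for well-definedness, Proposition~\ref{propos_thetaH} for $W_H$-invariance, and the parity identity $\operatorname{sgn}(\sigma_g)=\epsilon(g)$ for the degree. The only noteworthy difference is in the last step: the paper simply observes that each of the $|W:\Gamma_H|$ top cells of $\hM_{\Pi^n}$ maps with degree~$1$ onto one of the $|A|$ simplices of~$Z$ and concludes directly (implicitly relying on the transitivity of the $\F$-action on~$A$ to know that all simplices are hit equally often), whereas you deduce that the multiplicities $N_\sigma$ are constant from the fact that $\sum_\sigma N_\sigma[\sigma]_Z$ is a top-dimensional cycle in a strongly connected oriented pseudo-manifold. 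Your homological argument is a clean alternative that avoids appealing to transitivity.
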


\begin{proof}
To prove that $f$ is well defined we need to show that 
$$[\pi(p),\theta(g)\cdot\sigma_0]=[\pi(p),\theta(g')\cdot\sigma_0]$$ whenever $g'g^{-1}\in W(p)$.
Let $F_{\omega_1}\cap\ldots\cap F_{\omega_k}$ be the minimal face of~$\Pi^n$ that contains~$p$. We may renumerate the subsets $\omega_i$ so that $\omega_1\subset\cdots\subset\omega_k$. Then $\pi(p)\in\Delta_{\omega_1}$. 
The  subgroup $W(p)$ is generated by~$s_{\omega_1},\ldots,s_{\omega_k}$. Since $g'g^{-1}\in W(p)$, Proposition~\ref{propos_theta} implies that $\theta(g')=x\theta(g)$ for an $x\in \F_{\ge\omega_1}$. Then $\Lambda(x)$ belongs to the subgroup of~$S_A$ generated by all $\lambda_{\omega}$ such that $\omega\supseteq\omega_1$. Let $\sigma=\theta(g)\cdot\sigma_0$; then $\theta(g')\cdot\sigma_0=x\cdot\sigma$.
Equation~\eqref{eq_lambda} implies that $[\pi(p),x\cdot\sigma] =[\pi(p),\sigma]$. 
Therefore $f$ is well defined. Proposition~\ref{propos_thetaH} immediately yields  that $f$ is $W_H$-invariant.

Let $\Gamma_H=W_H\cap\Gamma$. The manifold~$\U_{\Pi^n}$ is glued of permutahedra indexed by elements $g\in W$.
We denote by $\Pi_g$ the permutahedron corresponding to~$g$. The restriction $f|_{\Pi_g}$ coincides with the composite mapping
$$
\Pi_{g}\xrightarrow{\iota_g^{-1}}\Pi^n \xrightarrow{\pi}\Delta^n \xrightarrow{\iota_{\theta(g)\cdot\sigma_0}} \theta(g)\cdot\sigma_0,
$$
where $\iota_g:\Pi^n\to\U_{\Pi^n}$ is the mapping given by $\iota_g(p)=[p,g]$. All mappings in this diagram have degrees~$\pm 1$. The mapping~$\pi$  has degree~$1$. The mapping~$\iota_g$ has degree~$1$ if and only if the element~$g$ is represented by a word of even length in generators~$s_{\omega}$. The mapping~$\iota_{\theta(g)\cdot\sigma_0}$ has degree~$1$ if and only if~$\theta(g)\cdot \sigma_0\in A_+$. Obviously, $\theta(g)\cdot \sigma_0\in A_+$ if and only if the element~$\theta(g)\in \F$ is represented by a word of even length in generators~$x_{\omega}$. Propostition~\ref{propos_theta} implies that the parity of the length of the word in~$x_{\omega}$ representing~$\theta(g)$ coincides with the parity of the length of the word in~$s_{\omega}$ representing~$g$. Therefore, the mapping $f|_{\Pi_{g}}:\Pi_{g}\to\theta(g)\cdot\sigma_0$ always has degree~$1$. Now we see that $\hM_{\Pi^n}$ is a connected oriented manifold glued out of~$|W:\Gamma_H|$ permutahedra and $Z$ is a strongly connected oriented pseudo-manifold glued out of~$|A|$ simplices. Each cell of~$\hM_{\Pi^n}$ is mapped by~$f_1$ onto a simplex of~$Z$ with degree~$1$. Hence, the degree of~$f_1$ is equal to $\frac{|W:\Gamma_H|}{|A|}$.
\end{proof}

\begin{remark}
By a theorem of Thom~\cite{Tho54}, for each~$n$, there is a positive integer~$k(n)$ such that the class~$k(n)z$ is realisable for every~$X$ and every $z\in H_n(X,\Z)$. Estimates for~$k(n)$ were obtained by Novikov~\cite{Nov62} and Buchstaber~\cite{Buc69}. The best known estimate is that $k(n)$ divides the number $\prod p^{\left[
\frac{n-2}{2(p-1)}
\right]}$, where the product is taken over all odd primes~$p$, see~\cite{Buc69}. Unfortunately, the combinatorial construction described above does not allow us to obtain a reasonable estimate for~$k(n)$. The multiplicity $\frac{|W:\Gamma_H|}{|A|}$ obtained by this construction can be huge. 
\end{remark}

\section{Proof of Theorem~\ref{theorem_sc}}
\label{section_proof_sc}

In this section we shall prove all assertions of Theorem~\ref{theorem_sc} except for~(e). It was shown in section~\ref{section_urc_sc} that Theorem~\ref{theorem_sc}(e) follows from Theorem~\ref{theorem_CAT(-1)}. The proof of Theorem~\ref{theorem_CAT(-1)} will be given in the next section. 

Since $\RZ_{\Pi^n}$ is a URC-manifold and $K_{\Pi^n}$ is isomorphic to~$(\partial\Delta^n)'$, the following proposition implies Theorem~\ref{theorem_sc}(a).

\begin{propos}\label{propos_simp_map}
Let $P_1$ and $P_2$ be $n$-dimensional simple cells such that there exists a simplicial mapping $\varphi:K_{P_1}\to K_{P_2}$ of non-zero degree. Then $\RZ_{P_1}\geqslant\RZ_{P_2}$.
\end{propos}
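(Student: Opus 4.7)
My plan is to construct a continuous map $\tilde f : \RZ_{P_1} \to \RZ_{P_2}$ of non-zero degree directly from the simplicial map $\varphi$. Let $F_1,\ldots,F_{m_1}$ be the facets of $P_1$ (indexed by the vertices $v_1,\ldots,v_{m_1}$ of $K_{P_1}$) and $G_1,\ldots,G_{m_2}$ the facets of $P_2$. The map $\varphi$ supplies two pieces of data: the continuous map $\bar\varphi : P_1 \to P_2$ constructed in Section~\ref{section_sc} (which satisfies $\bar\varphi(F_i) \subset G_{\varphi(v_i)}$), and the group homomorphism $\varphi_* : \Z_2^{m_1} \to \Z_2^{m_2}$ defined by $\varphi_*(a_i) = a_{\varphi(v_i)}$. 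The natural candidate is
\[
\tilde f\bigl([p,g]\bigr) \;=\; \bigl[\bar\varphi(p),\;\varphi_*(g)\bigr].
\]
Well-definedness is immediate: if $p\in F_i$, then $\bar\varphi(p)\in G_{\varphi(v_i)}$, so $\varphi_*$ sends each generator in the isotropy subgroup of $p$ to a generator in the isotropy subgroup of $\bar\varphi(p)$, which is precisely what the defining equivalence relations require.

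The second step is the degree computation. Since $\deg\varphi\neq 0$, the map $\varphi$ is surjective as a map between $(n-1)$-spheres, and in particular hits every vertex of $K_{P_2}$; hence $\varphi_*$ is surjective with kernel of order $2^{m_1-m_2}$. Fix a regular value $q$ in the interior of some top cell $P_2\times\{h\}$ of $\RZ_{P_2}$. Its $\tilde f$-preimages lie in the $2^{m_1-m_2}$ cells $P_1\times\{g\}$ with $\varphi_*(g)=h$, and on each such cell $\tilde f$ restricts to $\bar\varphi$, whose degree equals $\deg\varphi$ (by the standard identification of the relative degree of $\bar\varphi:(P_1,\partial P_1)\to(P_2,\partial P_2)$ with the degree of its boundary restriction).

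The delicate point, which I expect to be the only real obstacle, is to check that the $2^{m_1-m_2}$ cellwise local-degree contributions add rather than cancel. The standard orientation of $\RZ_P$ restricts to the copy $P\times\{g\}$ with sign $(-1)^{\ell(g)}$, where $\ell(g)$ is the number of generators appearing in $g\in\Z_2^m$. Writing $g=\sum_{i\in S}a_i$, one has $\ell_1(g)=|S|$, while $\ell_2(\varphi_*(g))$ is the number of vertices $w_j$ of $K_{P_2}$ with odd fibre $|\varphi^{-1}(w_j)\cap S|$; since $|S|=\sum_j|\varphi^{-1}(w_j)\cap S|$ these two integers have the same parity. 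Therefore the orientation twists on $\RZ_{P_1}$ and $\RZ_{P_2}$ cancel across every pair of matched cells, all $2^{m_1-m_2}$ local contributions carry the same sign, and
\[
\deg\tilde f \;=\; 2^{m_1-m_2}\cdot\deg\varphi \;\neq\; 0,
\]
establishing $\RZ_{P_1}\geqslant\RZ_{P_2}$.
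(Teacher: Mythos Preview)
Your proof is correct and follows essentially the same approach as the paper: both define the map $[p,g]\mapsto[\bar\varphi(p),\varphi_*(g)]$, verify well-definedness via $\bar\varphi(F_i)\subset G_{\varphi(v_i)}$, and compute the degree cellwise. Your treatment is in fact more careful than the paper's, which simply asserts the degree formula without discussing orientations; your parity argument showing $\ell_1(g)\equiv\ell_2(\varphi_*(g))\pmod 2$ supplies exactly the sign-coherence check that the paper hides behind the word ``obviously'' (and your exponent $m_1-m_2$ corrects what appears to be a typo in the paper's $2^{m_2-m_1}$).
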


\begin{proof}
Let $F_1,\ldots,F_{m_1}$ be the facets of~$P_1$ and let $G_1,\ldots,G_{m_2}$ be the facets of~$P_2$. Let~$\bar\varphi:P_1\to P_2$ be the mapping induced by~$\varphi$. Identifying the vertex sets of~$K_{P_1}$ and~$K_{P_2}$ with the sets~$[m_1]$ and~$[m_2]$ respectively, we may regard~$\varphi$ as a mapping $[m_1]\to [m_2]$. Let $\mu:\Z_2^{m_1}\to\Z_2^{m_2}$ be the homomorphism given by $\mu(a_i)=b_{\varphi(i)}$, where $a_1,\ldots,a_{m_1}$ and $b_1,\ldots,b_{m_2}$ are the standard generators of the groups~$\Z_2^{m_1}$ and~$\Z_2^{m_2}$ respectively. Since $\deg\varphi\ne 0$, we easily see that $\mu$ is surjective. Now we consider the mapping $f:\RZ_{P_1}\to\RZ_{P_2}$ given by $f([p,g])=[\bar\varphi(p),\mu(g)]$. Since  $\bar\varphi(F_i)\subset G_{\varphi(i)}$ for every~$i$, we see that the mapping~$f$ is well defined. Obviously, $f$ maps each $n$-cell of~$\RZ_{P_1}$ onto an $n$-cell of~$\RZ_{P_2}$ with degree~$\deg\varphi$. Therefore, $\deg f=2^{m_2-m_1}\deg\varphi\ne 0.$
\end{proof}

To prove Theorems~\ref{theorem_sc}(b),\,(c),\,(d) we shall show that (b)$\Rightarrow$(a) and (d)$\Rightarrow$(c)$\Rightarrow$(a).

\begin{proof}[Proof of (b)$\Rightarrow$(a)]

For any $(n-1)$-dimensional combinatorial sphere~$L$, there exists a simplicial mapping  $\varphi:L\to\partial\Delta^n$ of degree~$1$. This mapping induces a required simplicial mapping $\varphi':K_P=L'\to(\partial\Delta^n)'$.
\end{proof}

Let $P$ be a flag simple cell and let $0<\varepsilon<\pi$. We say that $P$ is $\varepsilon$-\textit{sparse} if there exists a homeomorphism $h:\partial P\to\bS^{n-1}$ such that $\dist(h(F_1),h(F_2))\ge\varepsilon$ for any two non-intersecting facets~$F_1$ and~$F_2$ of~$P$, and $\diam(h(F))\le\pi-\varepsilon$ for every facet~$F$ of~$P$. (The latter condition is technical. It guarantees that the open $\frac{\varepsilon}{2}$-neighborhood of every set $h(F)$ does not contain antipodal points.)

\begin{propos}\label{propos_finesparse}
Let $P_1$ and $P_2$ be $n$-dimensional simple cells such that $P_1$ is $\varepsilon$-fine and $P_2$ is flag and $\varepsilon$-sparse. Then there exists a simplicial mapping $K_{P_1}\to K_{P_2}$ of non-zero degree.
\end{propos}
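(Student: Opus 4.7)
The plan is to use the non-zero-degree map $g\colon K_{P_1}\to\bS^{n-1}$ provided by $\varepsilon$-fineness of $P_1$ together with the homeomorphism $h\colon\partial P_2\to\bS^{n-1}$ provided by $\varepsilon$-sparseness of $P_2$ to build a simplicial map $\varphi\colon K_{P_1}\to K_{P_2}$, and then to verify that $\deg\varphi\neq 0$ by showing that $h\circ\varphi$ is homotopic to $g$ on $\bS^{n-1}$.

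First I would define $\varphi$ on vertices: since the facets $F_v$ topologically cover $\partial P_2$, for each vertex $u$ of $K_{P_1}$ I can pick a vertex $\varphi(u)$ of $K_{P_2}$ with $g(u)\in h(F_{\varphi(u)})$. To see that this extends to a simplicial map I invoke flagness of $K_{P_2}$: for any simplex $\sigma=[u_0,\ldots,u_k]$ of $K_{P_1}$ it suffices to show that $\varphi(u_i)$ and $\varphi(u_j)$ are joined by an edge for every pair $i,j$. If not, the facets $F_{\varphi(u_i)}$ and $F_{\varphi(u_j)}$ would be disjoint, and $\varepsilon$-sparseness would force $\dist(h(F_{\varphi(u_i)}),h(F_{\varphi(u_j)}))\geq\varepsilon$; but $g(u_i)\in h(F_{\varphi(u_i)})$ and $g(u_j)\in h(F_{\varphi(u_j)})$ both lie in $g(\sigma)$, whose diameter is less than $\varepsilon$ by $\varepsilon$-fineness---a contradiction.

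Next I would show $\deg\varphi\neq 0$ by constructing a homotopy between $g$ and $h\circ\varphi$ given pointwise by the unique shortest geodesic on $\bS^{n-1}$ from $g(x)$ to $h(\varphi(x))$; the only thing to check is that these two points are never antipodal. Given $x$, let $\sigma=[u_0,\ldots,u_k]$ be its minimal carrier and write $x$ in barycentric coordinates $(\mu_0,\ldots,\mu_k)$; let $j$ be an index with $\mu_j$ maximal. From the structure of the barycentric subdivision of the simplex $\tau'=[\varphi(u_0),\ldots,\varphi(u_k)]$ of $K_{P_2}$, the image $\varphi(x)\in\tau'$ lies in the closed star $F_{\varphi(u_j)}$ of $\varphi(u_j)$ in $K'_{P_2}$. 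Since also $g(u_j)\in h(F_{\varphi(u_j)})$ and $\diam h(F_{\varphi(u_j)})\leq\pi-\varepsilon$, one gets $\dist(h(\varphi(x)),g(u_j))\leq\pi-\varepsilon$; combined with $\dist(g(u_j),g(x))<\varepsilon$ from $\varepsilon$-fineness, the triangle inequality yields $\dist(g(x),h(\varphi(x)))<\pi$, as needed. Since $h$ is a homeomorphism and $\deg g\neq 0$, one then concludes $\deg\varphi=\pm\deg(h\circ\varphi)=\pm\deg g\neq 0$.

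The main obstacle is the non-antipodality estimate in the last step: everything hinges on choosing, for each $x$, the correct vertex $\varphi(u_j)$ whose star in $K'_{P_2}$ simultaneously contains $\varphi(x)$ and $g(u_j)$, and only then does the $\pi-\varepsilon$ diameter bound built into $\varepsilon$-sparseness combine with the $\varepsilon$-fineness bound to produce the strict inequality $\dist(g(x),h(\varphi(x)))<\pi$. This is exactly what the technical upper bound on $\diam h(F)$ in the definition of $\varepsilon$-sparseness is engineered to deliver.
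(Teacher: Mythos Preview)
Your argument follows the same route as the paper's proof: define the map on vertices by choosing, for each vertex $u$ of $K_{P_1}$, a facet of $P_2$ containing $h^{-1}(g(u))$; extend simplicially using flagness together with the $\varepsilon$-separation of disjoint facets; and verify non-zero degree by showing $\dist\bigl(g(x),h(\varphi(x))\bigr)<\pi$ for every $x$, via a suitable vertex $u_j$ of the carrier of $x$.

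There is one slip in your non-antipodality step. Choosing $j$ with $\mu_j$ maximal does \emph{not} force $\varphi(x)$ into the closed star of $\varphi(u_j)$ when $\varphi$ identifies several vertices of $\sigma$: for instance, if $\varphi(u_1)=\varphi(u_2)\neq\varphi(u_0)$ and $x$ has barycentric coordinates $(0.4,0.3,0.3)$, then $\varphi(x)$ has coordinates $(0.4,0.6)$ in $\tau'$ and lies in the star of $\varphi(u_1)$, not of $\varphi(u_0)$. The fix is immediate: instead choose any $j$ such that $\varphi(x)\in F_{\varphi(u_j)}$; such a $j$ exists because $\varphi(x)\in\tau'$ lies in the closed star of some vertex of $\tau'$, and every vertex of $\tau'$ is $\varphi(u_j)$ for some $u_j\in\sigma$. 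Then $u_j$ is still a vertex of $\sigma$, so $\dist(g(x),g(u_j))<\varepsilon$, and your triangle-inequality estimate goes through unchanged. The paper handles this step in the equivalent language of facets, picking a facet $F_i$ of $P_1$ containing $x$ and invoking $\Phi(F_i)\subset G_{\varphi(i)}$.
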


\begin{proof}
Let $F_1,\ldots,F_{m_1}$ be the facets of~$P_1$ and let $G_1,\ldots,G_{m_2}$ be the facets of~$P_2$. Then $K_1=K_{P_1}$ and $K_2=K_{P_2}$ are simplicial complexes on the vertex sets $\{u_1,\ldots,u_{m_1}\}$ and $\{v_1,\ldots,v_{m_2}\}$ respectively.
Let $f:K_1\to\bS^{n-1}$ be the mapping of non-zero degree such that $\diam(f(\sigma))<\varepsilon$ for every simplex~$\sigma$ of~$K_1$. Let $h:\partial P_2\to\bS^{n-1}$ be the homeomorphism such that $\dist(h(G_i),h(G_j))\ge\varepsilon$ whenever $G_i\cap G_j=\emptyset$, and $\diam(h(G_i))\le\pi-\varepsilon$ for every~$i$. We define a mapping $\varphi:[m_1]\to [m_2]$ in the following way. 
For every $i\in[m_1]$, consider the point~$h^{-1}(f(u_i))\in\partial P_2$, choose an arbitrary facet~$G_j$ containing this point, and put $\varphi(i)=j$. Now let us prove that the mapping~$\varphi$ induces a simplicial mapping  
$\Phi:K_1\to K_2$. For every $i$, we put $\Phi(u_i)=v_{\varphi(i)}$.
Suppose $[u_{i_1}\ldots u_{i_k}]$ is a simplex of~$K_1$. Then $\dist(f(u_{i_p}),f(u_{i_q}))<\varepsilon$ for any~$p$ and~$q$. But $f(u_{i_p})\in h(G_{\varphi(i_p)})$ and $f(u_{i_q})\in h(G_{\varphi(i_q)})$. Hence, $G_{\varphi(i_p)}\cap G_{\varphi(i_q)}\ne \emptyset$. Since $K_{2}$ is a flag complex, we obtain that $G_{\varphi(i_1)}\cap\ldots\cap G_{\varphi(i_k)}\ne\emptyset$. Therefore, the vertices~$\Phi(u_{i_1}),\ldots,\Phi(u_{i_k})$ span a simplex in~$K_{2}$. Thus, $\Phi$ uniquely extends to a simplicial mapping $\Phi:K_1\to K_2$.

To show that $\Phi$ has non-zero degree, we need to prove that the mappings $\Phi:K_1\to K_2$ and $h^{-1}f:K_1\to\partial P_2=K_2$ are homotopic. Equivalently, we need to prove that the mappings $h\Phi$ and $f$ of $K_1$ to $\bS^{n-1}$ are homotopic. Let $x\in K_1=\partial P_1$ be an arbitrary point. Let $F_i$ be a facet of $P_1$ containing~$x$ and let $\sigma$ be a simplex of~$K_1$ containing~$x$. Then $u_i\in\sigma$, hence, $\dist(f(x),f(u_i))<\varepsilon$. Since $\Phi(F_i)\subset G_{\varphi(i)}$, we obtain that $h(\Phi(x))\in  h(G_{\varphi(i)})$. On the other hand, we have $f(u_i)\in h(G_{\varphi(i)})$ by the definition of~$\varphi$. Hence, $\dist(h(\Phi(x)),f(u_i))\le\diam(h( G_{\varphi(i)}))\le\pi-\varepsilon$. Therefore, $\dist(h(\Phi(x)),f(x))<\pi$. Since this inequality holds for every~$x\in K_1$, we see that $h\Phi$ is homotopic to~$f$.
\end{proof}

\begin{cor}\label{cor_finesparse}
Let $P_1$ and $P_2$ be $n$-dimensional simple cells such that $P_1$ is  $\varepsilon$-fine and $P_2$ is flag and $\varepsilon$-sparse. Then $\RZ_{P_1}\geqslant\RZ_{P_2}$.
\end{cor}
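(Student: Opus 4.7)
The plan is extremely short: this corollary is simply the composition of the two propositions immediately preceding it. First I would invoke Proposition~\ref{propos_finesparse} to extract a simplicial mapping $\varphi:K_{P_1}\to K_{P_2}$ of non-zero degree from the hypotheses that $P_1$ is $\varepsilon$-fine and $P_2$ is flag and $\varepsilon$-sparse. Then I would feed this $\varphi$ into Proposition~\ref{propos_simp_map}, which upgrades any simplicial mapping of non-zero degree between $K_{P_1}$ and $K_{P_2}$ to a continuous mapping $f:\RZ_{P_1}\to\RZ_{P_2}$ of non-zero degree, explicitly of degree $2^{m_2-m_1}\deg\varphi\ne 0$. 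By definition of the domination relation, this yields $\RZ_{P_1}\geqslant\RZ_{P_2}$.

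There is no real obstacle here, since both ingredients are already established; the corollary is purely a matter of composing them. If I wanted to be slightly more careful, I would just note that the non-vanishing of $\deg\varphi$ guaranteed by Proposition~\ref{propos_finesparse} is exactly the hypothesis needed to apply Proposition~\ref{propos_simp_map}, and that $m_1$ and $m_2$ (the numbers of facets of $P_1$ and $P_2$) are finite, so the factor $2^{m_2-m_1}$ is a non-zero integer. No further argument is required.
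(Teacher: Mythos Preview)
Your proposal is correct and is exactly what the paper intends: the corollary is stated without proof immediately after Propositions~\ref{propos_simp_map} and~\ref{propos_finesparse}, and follows by composing them precisely as you describe. One tiny caveat: the factor $2^{m_2-m_1}$ need not be an integer when $m_1>m_2$ (the paper's formula is likely a typo for $2^{m_1-m_2}$, since surjectivity of $\mu$ forces $m_1\ge m_2$), but this is irrelevant---all you need is that the degree is non-zero, which Proposition~\ref{propos_simp_map} guarantees.
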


\begin{propos}\label{propos_permutohedron}
For every $n\ge 2$, the permutahedron~$\Pi^n$ is $\varepsilon_n$-sparse.
\end{propos}

\begin{proof}
We consider the standard realisation of~$\Pi^n$ in~$\R^{n+1}$ whose vertices are obtained by permuting the coordinates of the point $(1,2,\ldots,n+1)$. Let $o=(\frac{n+2}{2},\ldots,\frac{n+2}{2})$ be the centre of~$\Pi^n$. It is easy to compute that the circumscribed sphere of~$\Pi^n$ has radius $R_n=\sqrt{\frac{n(n+1)(n+2)}{12}}$.
Identify $\bS^{n-1}$ with the unit sphere with centre at~$o$ in the hyperplane containing~$\Pi^n$,  and let $h:\partial\Pi^n\to\bS^{n-1}$ be the radial projection from~$o$. Suppose, $x$ and $y$ are points belonging to non-intersecting facets~$F_{\omega_1}$ and $F_{\omega_2}$ of~$\Pi^n$ and let $\xi=(\xi_1,\ldots,\xi_{n+1})$ and $\eta=(\eta_1,\ldots,\eta_{n+1})$ be the vectors $\overrightarrow{ox}$ and~$\overrightarrow{oy}$ respectively. Since $F_{\omega_1}\cap F_{\omega_2}=\emptyset$, we see that neither $\omega_1\subset\omega_2$ nor  $\omega_2\subset\omega_1$. Choose any $j\in\omega_1\setminus\omega_2$ and any $k\in\omega_2\setminus\omega_1$. Then $\xi_j-\xi_k\ge 1$ and $\eta_k-\eta_j\ge 1$. We have,
$$
\cos\dist(h(x),h(y))= \frac{(\xi,\eta)}{|\xi||\eta|}= \frac{\sum_{i\ne j,k}\xi_i\eta_i+\xi_j\eta_k+\xi_k\eta_j-(\xi_j-\xi_k)(\eta_k-\eta_j)}{|\xi||\eta|}\le 1-\frac{1}{R_n^2}
$$
Hence, $\dist(h(x),h(y))\ge\varepsilon_n$.

Now, we need to prove that $\diam(h(F_{\omega}))\le\pi-\varepsilon_n$ for every~$\omega$. Let $B$ be the circumscribed ball of~$\Pi^n$ and let $H_{\omega}$ be the affine plane spanned by~$F_{\omega}$. It is easy to see that the distance from~$o$ to~$H_{\omega}$ is at least~$r_n=\frac{1}{2}\sqrt{n(n+1)}$. (The value~$r_n$ is attained if $|\omega|$ is either~$1$ or~$n$.) Since $F_{\omega}\subset H_{\omega}\cap B$, we have
$$
\diam(h(F_{\omega}))\le 2\arccos\frac{r_n}{R_n}=\pi-\arccos\left(1-\frac{6}{n+2}\right)\le \varepsilon_n.
$$
Thus $\Pi^n$ is $\varepsilon_n$-sparse.
\end{proof}

Propositions~\ref{propos_finesparse} and~\ref{propos_permutohedron} imply that (c)$\Rightarrow$(a). Since $$\rho_n=\arcsinh\cot\frac{\varepsilon_n}{2},$$ the following proposition yields~(d)$\Rightarrow$(c). 

\begin{propos}\label{propos_rho}
Let $0<\varepsilon\le\frac{\pi}{2}$ and let $P\subset\Lob^n$ be a simple convex polytope whose interior contains a closed ball of radius $\rho=\arcsinh\cot\frac{\varepsilon}{2}$. Then $P$ is $\varepsilon$-fine.
\end{propos}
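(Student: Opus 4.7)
The plan is to exploit the inscribed ball to centre a radial construction on $\bS^{n-1}$. Let $o$ be the centre of the closed ball of radius $\rho=\arcsinh\cot(\varepsilon/2)$ contained in the interior of $P$, and identify $\bS^{n-1}$ with the unit tangent sphere $T^1_o\Lob^n$. Since this ball lies in~$P$, the supporting hyperplane $H_i$ of every facet $F_i$ is at distance at least $\rho$ from~$o$. The key hyperbolic trigonometric fact is that the set of directions from $o$ meeting $H_i$ forms an open round spherical cap of $\bS^{n-1}$ centred at the perpendicular direction $\nu_i$ with half-angle $\mathop{\mathrm{arccot}}(\sinh\rho)$, which by our choice of $\rho$ is exactly $\varepsilon/2$; moreover, for any finite point $p\in F_i$ the direction $\vec{op}$ lies strictly inside this cap.

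I would then define $f\colon K_P\to\bS^{n-1}$ by setting $f(v_i)=\nu_i$ on vertices and extending simplicially (for instance by linear interpolation in the ambient~$\R^n$ followed by radial projection; validity follows from the size estimate below). For any simplex $\sigma=[v_{i_1},\ldots,v_{i_k}]$ of $K_P$, pick $q\in F_{i_1}\cap\cdots\cap F_{i_k}$, which is nonempty by definition of $K_P$. The cap bound gives $\dist_{\bS^{n-1}}(\nu_{i_j},\vec{oq})<\varepsilon/2$ for each $j$, and the triangle inequality then forces the $\nu_{i_j}$ to be pairwise within $\varepsilon$ of each other. Since $\varepsilon\le\pi/2$, the resulting spherical geodesic simplex $f(\sigma)$ lies inside an open hemisphere and satisfies $\diam(f(\sigma))<\varepsilon$, verifying the diameter condition in the definition of $\varepsilon$-fineness.

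To prove $\deg f\ne 0$, I would compare $f$ with the radial projection $r\colon\partial P\to\bS^{n-1}$, $r(p)=\vec{op}$, which is a homeomorphism because $P$ is convex and hence star-shaped from~$o$. For any point $p\in K_P$, pick a simplex $\sigma=[v_{i_1},\ldots,v_{i_k}]$ of $K_P$ containing~$p$: under the identification $|K_P|=\partial P$, each barycentric cell of~$\sigma$ sits inside a face $F_{\tau_0}$ of~$P$ with $\tau_0\subseteq\sigma$, so $p$ lies in the facet $F_{i_a}$ for some vertex $v_{i_a}$ of~$\sigma$. Hence $\dist(r(p),\nu_{i_a})<\varepsilon/2$ from the cap bound and $\dist(f(p),\nu_{i_a})<\varepsilon$ from the previous paragraph, giving $\dist(r(p),f(p))<3\varepsilon/2\le 3\pi/4<\pi$. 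Therefore the straight-line geodesic homotopy on $\bS^{n-1}$ between $r$ and~$f$ is well-defined, so $f\simeq r$ and $\deg f=\pm 1\ne 0$.

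The least automatic step is the third. The hyperbolic trigonometry reduces to the single identity $\sinh\rho=\cot(\varepsilon/2)$ that is built into the hypothesis, and the simplicial extension of $f$ is routine given the small diameters. The genuine obstacle is the bookkeeping hidden in the identification $|K_P|=\partial P$: one must show that any point of a simplex $\sigma$ of $K_P$ already lies in a facet of~$P$ indexed by a vertex of~$\sigma$, for only then can the cap bound be applied to estimate $\dist(r(p),\nu_{i_a})$ and thereby compare $r$ with~$f$.
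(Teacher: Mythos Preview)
Your proof is correct and follows essentially the same approach as the paper: the paper also takes the foot-of-perpendicular directions $\xi_i=\nu_i$, defines the simplicial map $\tilde f=f$ by linear interpolation and radial projection, uses the same hyperbolic right-triangle computation $\cot\alpha>\sinh\rho=\cot(\varepsilon/2)$ to get the $\varepsilon/2$ cap bound, and compares with the radial projection via a $3\varepsilon/2<\pi$ estimate to conclude degree~$1$. Your discussion of the identification $|K_P|=\partial P$ (that a point of a simplex $\sigma$ lies in a facet indexed by a vertex of~$\sigma$) is in fact more explicit than the paper's, which compresses this into the single line ``$f(\Delta)$ is contained in an $\frac{\varepsilon}{2}$-neighbourhood of~$\tilde f(\Delta)$.''
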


\begin{proof}
Let $B\subset P$ be a ball of radius~$\rho$ and let $o$ be its centre. Identify $\bS^{n-1}$ with the sphere of unit vectors in~$T_{o}\Lob^n$. Define the mapping $f:\Lob^n\setminus\{ o\}\to \bS^{n-1}$ by taking every point $x$ to the tangent vector to the line segment $[ox]$ at~$o$. 
Obviously, the degree of~$f|_{\partial P}$ is equal to~$1$.

Let $F_1,\ldots,F_m$ be the facets of~$P$ and let $v_1,\ldots,v_m$ be the corresponding vertices of~$K_P$. For each~$i$, let $p_i$ be the orthogonal projection of the point $o$ onto the hyperplane spanned by~$F_i$ and let $\xi_i=f(p_i)$. For each point $x\in F_i$, $op_ix$ is a triangle with right angle at the vertex~$p_i$ and angle $\dist_{\bS^{n-1}}(f(x),\xi_i)$ at the vertex~$o$. Hence,
$$
\cot\bigl(\dist_{\bS^{n-1}}(f(x),\xi_i)\bigr)=
\sinh\bigl(\dist_{\Lob^n}(o,p_i)\bigr)\coth\bigl(\dist_{\Lob^n}(x,p_i)\bigr)>\sinh\rho=\cot\frac{\varepsilon}{2}
$$
Therefore, 
\begin{equation}
\label{eq_-1ineq}
\dist_{\bS^{n-1}}(f(x),\xi_i)<\frac{\varepsilon}{2}
\end{equation}
Consequently, $\dist_{\bS^{n-1}}(\xi_i,\xi_j)<\varepsilon$ whenever $F_i\cap F_j\ne\emptyset$.

Define a mapping $\tilde{f}:K_P\to\bS^{n-1}$ by putting 
$$
\tilde{f}(x)=\frac{\beta_1\xi_{i_1}+\ldots+\beta_k\xi_{i_k}}{|\beta_1\xi_{i_1}+\ldots+\beta_k\xi_{i_k}|}
$$
for a point $x$ that has barycentric coordinates $\beta_1,\ldots,\beta_k$ in a simplex $[v_{i_1}\ldots v_{i_k}]$. Obviously, $\tilde{f}$ is well defined. We have~$\tilde{f}(v_i)=\xi_i$. For each simplex $\Delta=[v_{i_1}\ldots v_{i_k}]$ of~$K_P$, $\tilde{f}(\Delta)$ is the convex hull of the points~$\xi_{i_1},\ldots,\xi_{i_k}$ in~$\bS^{n-1}$. Pairwise distances between the points $\xi_{i_1},\ldots,\xi_{i_k}$ are less than~$\varepsilon$. Since $\varepsilon\le\frac{\pi}{2}$, this implies that $\diam(\tilde{f}(\Delta))<\varepsilon$. Besides, inequality~\eqref{eq_-1ineq} implies that the set $f(\Delta)$ is contained in an $\frac{\varepsilon}{2}$-neighbourhood of the convex simplex~$\tilde{f}(\Delta)$. It follows that $\dist_{\bS^{n-1}}(\tilde{f}(x),f(x))<\pi$ for every $x\in K_P=\partial P$. Hence $\tilde{f}$ is homotopic to $f|_{\partial P}$. Therefore, $\tilde{f}$ has degree~$1$.
Thus $P$ is $\varepsilon$-fine.
\end{proof}

\section{Proofs of Theorems~\ref{theorem_hyp} 
and~\ref{theorem_CAT(-1)}}
\label{section_proof_hyp}

We have shown in section~\ref{section_urc_sc} that Theorem~\ref{theorem_CAT(-1)} implies Theorem~\ref{theorem_hyp}. Nevertheless, it is useful to give a proof of Theorem~\ref{theorem_hyp} first, because this proof contains all ideas behind the proof 
of Theorem~\ref{theorem_CAT(-1)}.

\begin{proof}[Proof of Theorem~\ref{theorem_hyp}]

Let $\M$ be the set of all mirrors of reflections $s\in W$. Choose a point $o\in \Lob^n$ and denote by~$\M_{>\rho_n}$ the set of all mirrors~$H\in\M$ such that the distance from~$o$ to~$H$ is greater than~$\rho_n$. For each~$H\in\M_{>\rho_n}$, let $\Lambda^+_H\subset\Lob^n$ be the closed half-space bounded by~$H$ and containing~$o$. Denote by~$P_1$ the intersection of all half-spaces~$\Lambda^+_H$, $H\in\M_{>\rho_n}$, see Figure~\ref{fig_P1}. (In this figure  the central pentagon is~$P$, the dashed circle is the sphere of centre~$o$ and radius~$\rho_n$, and the polygon bounded by the thick line is  $P_1$.) Obviously,  $P_1$ is a compact convex polytope. Since mirrors in~$\M$ are orthogonal whenever intersect, we see that the polytope~$P_1$ is right-angular. In particular, it is simple. Let $W_1$ be the group generated by the reflections in facets of~$P_1$. Then $W_1$ is a subgroup of finite index in~$W$. Let $F_1,\ldots,F_{m_1}$ be the facets of~$P_1$ and let $s_1,\ldots,s_{m_1}$ be the orthogonal reflections in these facets respectively. Consider the homomorphism $\eta_1:W_1\to\Z_2^{m_1}$ that takes each~$s_i$ to the generator of the $i$th factor~$\Z_2$ and put $\Gamma_1=\ker\eta_1$. Then $\Gamma_1$ acts freely on~$\Lob^n$ and the quotient~$\Lob^n/\Gamma_1$ is homeomorphic to~$\RZ_{P_1}$. Since the interior of~$P_1$ contains a ball of radius~$\rho_n$, we obtain that $\Lob^n/\Gamma_1$ is a URC-manifold. But the manifolds~$\Lob^n/\Gamma$ and $\Lob^n/\Gamma_1$ possess a common finite-sheeted covering, since the groups~$\Gamma$ and~$\Gamma_1$ are commensurable. Therefore, $\Lob^n/\Gamma$ is a URC-manifold.
\end{proof}

\begin{figure}
\begin{center}
\unitlength=1cm
\includegraphics[scale=.8]{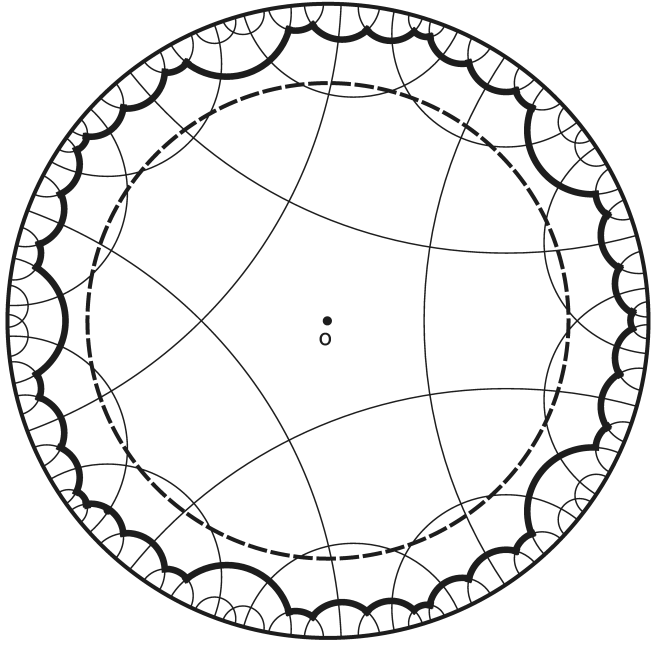}
\end{center}
\caption{Polytope~$P_1$}\label{fig_P1}
\end{figure}

\begin{proof}[Proof of Theorem~\ref{theorem_CAT(-1)}]
The idea is to mimic the above proofs of Proposition~\ref{propos_rho} and Theorem~\ref{theorem_hyp} using the  $\CAT(-1)$ inequality instead of constant negative curvature to obtain an analogue of inequality~\eqref{eq_-1ineq}. However, we shall face some additional difficulties on this way.

The group~$W=W_P$ acts by reflections on the $\CAT(-1)$ mani\-fold~$\U_P$. Let $\M$ be the set of all mirrors of reflections $s\in W$. Let $o\in \U_P$ be a point with a neighbourhood isometric to an open subset of~$\Lob^n$. Then the tangent space~$T_o\U_P$ is well defined. Let $\rho$ be a sufficiently large positive number. (A particular $\rho$ will be chosen later.) Denote by~$\M_{>\rho}$ the set of all mirrors~$H\in\M$ such that the distance from~$o$ to~$H$ is greater than~$\rho$. For each $H\in\M_{>\rho}$, let $\Lambda^+_H\subset\U_P$ be the closed half-space bounded by~$H$ and containing~$o$. Denote by~$P_1$ the intersection of all half-spaces~$\Lambda^+_H$, $H\in\M_{>\rho}$. Let $H_1,\ldots,H_{m_1}\in\M_{>\rho}$ be all mirrors intersecting~$P_1$; then the subsets~$F_i=H_i\cap P_1$ will be called \textit{facets\/} of~$P_1$ and their non-empty intersections will be called \textit{faces\/} of~$P_1$. (Since $W$ is a \textit{right-angular\/} Coxeter group, it follows that every~$F_i$ is $(n-1)$-dimensional.) Let $W_1\subset W$ be the subgroup of finite index  generated by the reflections in mirrors $H_1,\ldots,H_{m_1}$. 

If $P_1$ were a simple cell, $\RZ_{P_1}$ would be a finite-sheeted covering of~$\RZ_P$. Hence we would suffice to prove that~$\RZ_{P_1}$ is a URC-manifold. The author does not know whether $P_1$ is always a simple cell (see Remark~\ref{remark_simple_cell} below). Nevertheless, $P_1$ ``looks like'' a simple cell so that we can mimic the proofs of Propositions~\ref{propos_simp_map}, \ref{propos_finesparse}, and~\ref{propos_rho} in this context. The main difficulty is that we cannot identify $P_1$ with a cone over the dual combinatorial sphere~$K_{P_1}$ and, hence, should avoid usage of~$K_{P_1}$.  We start with the following obvious 

\begin{propos}
There exists a mapping $t:\Pi^n\to\Pi^n$ such that $t(U_{\omega})\subset F_{\omega}$ for some open neighbourhoods~$U_{\omega}$ of  facets~$F_{\omega}\subset\Pi^n$.
\end{propos}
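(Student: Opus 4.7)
The plan is to build $t$ as a piecewise linear self-map of $\Pi^n$ that, for a small parameter $\varepsilon>0$, collapses an $\varepsilon$-collar of each facet $F_\omega$ into $F_\omega$ itself; the $U_\omega$ will then be defined as the interior in $\Pi^n$ of the preimage $t^{-1}(F_\omega)$. The geometric input is that $\Pi^n$ is a simple polytope, so its barycentric subdivision $(\Pi^n)'$ has a standard dual cubical refinement: for every vertex $v$ of $\Pi^n$, the union $C_v$ of simplices of $(\Pi^n)'$ containing $v$ is combinatorially an $n$-cube, and the cubes $\{C_v\}$ cover $\Pi^n$ and meet along cubical faces.

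\textbf{Coordinates on cubes.} For each $v$ I will fix an identification $C_v\cong[0,1]^n$ sending $v$ to $(0,\ldots,0)$ and the barycentre of $\Pi^n$ to $(1,\ldots,1)$. At every vertex $v$ of $\Pi^n$ the $n$ facets through $v$ are indexed by a chain $\omega_1\subsetneq\cdots\subsetneq\omega_n$ of non-empty proper subsets of $[n+1]$; I choose the identification so that the hyperplane $\{x_i=0\}$ of $C_v$ lies in $F_{\omega_i}$. The key compatibility is that two adjacent vertices $v,v'$ of $\Pi^n$ share $n-1$ facets, the common face $C_v\cap C_{v'}$ is an $(n-1)$-cube, and the $n-1$ shared coordinates can be labelled consistently with those shared facets so that the coordinates on $C_v\cap C_{v'}$ agree.

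\textbf{The map $t$.} Fix a piecewise linear $\phi\colon[0,1]\to[0,1]$ with $\phi(s)=0$ for $s\le\varepsilon$ and $\phi$ affine on $[\varepsilon,1]$ with $\phi(1)=1$. Define $t$ on each cube by $t(x_1,\ldots,x_n)=(\phi(x_1),\ldots,\phi(x_n))$. Coordinate-wise agreement on overlaps yields a well-defined continuous map $t\colon\Pi^n\to\Pi^n$, and by the shape of $\phi$ any point of $C_v$ whose $i$-th coordinate is less than $\varepsilon$ is sent into $\{x_i=0\}\subset F_{\omega_i}$.

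\textbf{Neighbourhoods and the main obstacle.} Set $U_\omega$ to be the interior in $\Pi^n$ of $t^{-1}(F_\omega)$; then $t(U_\omega)\subset F_\omega$ automatically. For the inclusion $F_\omega\subset U_\omega$ pick $q\in F_\omega$ and any cube $C_v$ with $v\in F_\omega$ containing $q$: at $q$ the coordinate associated to $F_\omega$ vanishes, every sufficiently close point has this coordinate less than $\varepsilon$ in each cube containing it by the coordinate compatibility, and hence is sent into $F_\omega$ by $t$. The one place I expect real bookkeeping is the coherent choice of coordinate labellings across the cubical complex, but this is standard for the dual cell decomposition of a simple polytope and is further simplified for $\Pi^n$ by the totally ordered chain structure of facets at each vertex, so it poses no genuine obstacle.
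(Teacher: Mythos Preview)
The paper calls this proposition ``obvious'' and gives no proof, so there is no argument to compare against. Your cubical-collapse construction is a correct way to make the statement precise. The one point you flag---choosing the identifications $C_v\cong[0,1]^n$ coherently on overlaps---is indeed the only place requiring care; a clean way to secure it is to define, for each facet $F_\omega$, the simplicial map $d_\omega:(\Pi^n)'\to[0,1]$ taking a barycentre $b_G$ to $0$ if $G\subset F_\omega$ and to $1$ otherwise. Then for a vertex $v\in F_{\omega_1}\cap\cdots\cap F_{\omega_n}$ the map $(d_{\omega_1},\ldots,d_{\omega_n})$ is a simplicial isomorphism from $C_v$ onto $[0,1]^n$ with its staircase triangulation, and since each $d_\omega$ is defined globally on $(\Pi^n)'$ the cube coordinates agree on overlaps automatically. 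With this in hand, your map $t$ and the neighbourhoods $U_\omega$ work exactly as you describe.
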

 
Notice that we denote facets of~$P_1$ by $F_i$ and facets of~$\Pi^n$ by~$F_{\omega}$. This will not lead to a confusion.

Obviously, the mapping~$t$ has degree~$1$. 
Now we consider the mapping $$\varphi=th^{-1}:\bS^{n-1}\to\partial\Pi^n,$$ where $h:\partial\Pi^n\to \bS^{n-1}$ is the radial projection constructed in the proof of Proposition~\ref{propos_permutohedron}. Since the permutahedron has finitely many facets and every facet is compact, we obtain that the mapping~$\varphi$ satisfies the following property:

\begin{itemize}
\item[$(*)$]\textit{There is an~$\varepsilon>0$ such that $\varphi(x)\in F_{\omega}$ whenever $\dist(x,h(F_{\omega}))\le\varepsilon$.}  
\end{itemize}

Now we are ready to choose a particular~$\rho>0$. We take $\rho=\arcsinh\cot\frac{\varepsilon}{2}$.

Identify~$\bS^{n-1}$ with the unit sphere in~$T_o\U_P$. Since $\U_P$ is a $\CAT(-1)$ manifold,  the point~$o$ is joined with every point~$x\in\U_P$ by a unique geodesic segment. We denote by~$f(x)$ the unit tangent vector to this geodesic segment at~$o$. Then $f:\U_P\setminus\{o\}\to\bS^{n-1}$ is a continuous mapping.

\begin{propos}\label{propos_estim}
For each facet $F_i$ of~$P_1$, the diameter of~$f(F_i)$ does not exceed~$\varepsilon$. 
\end{propos}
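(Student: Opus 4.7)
The plan is to mimic the proof of Proposition~\ref{propos_rho}, replacing direct hyperbolic computations by $\CAT(-1)$ comparison-triangle arguments. Each mirror $H_i\in\M_{>\rho}$ is a closed convex subspace of the $\CAT(-1)$ manifold~$\U_P$, being the fixed set of an isometric involution in~$W_P$. I let $p_i$ be the unique nearest point of~$H_i$ to~$o$, set $\xi_i=f(p_i)$, and reduce the statement to showing
\begin{equation*}
\dist_{\bS^{n-1}}(f(x),\xi_i)<\frac{\varepsilon}{2}\quad\text{for every } x\in F_i\subset H_i;
\end{equation*}
the triangle inequality on~$\bS^{n-1}$ then yields $\diam f(F_i)<\varepsilon$. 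Since $o$ has a hyperbolic neighbourhood, $\dist_{\bS^{n-1}}(f(x),f(y))$ coincides with the Alexandrov angle $\angle_o(x,y)$, which is what allows the comparison-triangle estimates below to actually control~$f$.

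The key geometric input is the first-variation property in $\CAT(0)$ geometry: for the nearest-point projection $p_i$ of~$o$ onto a convex set, $\angle_{p_i}(o,x)\ge\pi/2$ for every $x\in H_i$. Writing $T=op_ix\subset\U_P$ and letting $T^{*}=o^{*}p_i^{*}x^{*}\subset\Lob^2$ be its comparison triangle, the $\CAT(-1)$ inequality bounds each Alexandrov angle of~$T$ above by the corresponding angle of~$T^{*}$. In particular the angle $\gamma^{*}$ of~$T^{*}$ at $p_i^{*}$ satisfies $\gamma^{*}\ge\pi/2$, and the angle $\alpha^{*}$ at $o^{*}$ is an upper bound for $\angle_o(p_i,x)$.

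Put $b=\dist(o,p_i)>\rho$, $a=\dist(p_i,x)$ and $c=\dist(o,x)$. If $a=0$ the bound is trivial, so assume $a>0$. Because $\gamma^{*}\ge\pi/2$, the first hyperbolic cosine rule gives $\cosh c\ge\cosh a\cosh b$, so $c$ is at least the hypotenuse $c^{**}$ of the auxiliary right hyperbolic triangle with the same legs $a,b$. A brief monotonicity check on the second cosine rule (valid because $c\ge b$) shows that the angle opposite~$a$ is non-increasing in~$c$, hence $\alpha^{*}\le\alpha^{**}$, where $\alpha^{**}$ is the angle at~$o^{**}$ in that right triangle. This $\alpha^{**}$ is computed exactly as in Proposition~\ref{propos_rho}:
\begin{equation*}
\cot\alpha^{**}=\sinh b\,\coth a>\sinh\rho=\cot\frac{\varepsilon}{2},
\end{equation*}
so $\alpha^{**}<\varepsilon/2$ and $\angle_o(p_i,x)<\varepsilon/2$, as required.

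The main obstacle is conceptual rather than computational: since $P_1$ is not known to be a simple cell, every quantitative statement about distances and angles must be made directly inside~$\U_P$ and passed to~$\Lob^2$ only through the $\CAT(-1)$ inequality. The two substantive verifications are that each mirror $H_i$ is genuinely a closed convex subspace, which is standard for fixed-point sets of isometric involutions on $\CAT(0)$ spaces, and the monotonicity of $\alpha^{*}$ in~$c$ for fixed legs $a,b$ on the range $c\ge b$, which is an elementary calculation with the second hyperbolic cosine rule. Once these are in hand, the proof is a line-by-line transcription of inequality~\eqref{eq_-1ineq} into the $\CAT(-1)$ setting.
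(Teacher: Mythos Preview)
Your proof is correct and follows essentially the same route as the paper: project $o$ to a nearest point $p\in H_i$, form the comparison triangle in $\Lob^2$, establish that the comparison angle at $p^*$ is at least $\pi/2$, and conclude $\alpha^*<\varepsilon/2$ by hyperbolic trigonometry. The only real difference is in how the right-angle bound at $p^*$ is obtained: you invoke the first-variation inequality $\angle_{p_i}(o,x)\ge\pi/2$ for projections onto convex sets and then pass to the comparison angle, whereas the paper argues directly that since $[xp]\subset H_i$ every point of $[xp]$ lies at distance $\ge d$ from $o$, pushes this through the $\CAT(-1)$ inequality to the comparison segment $[x^*p^*]$, and reads off $\beta^*\ge\pi/2$ from the fact that $p^*$ is the closest point of that segment to~$o^*$. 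Your explicit monotonicity step (comparing $\alpha^*$ to the right-angled $\alpha^{**}$) spells out what the paper compresses into the single line $\cot\alpha^*\ge\sinh d$.
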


\begin{proof}
Let $H_i\in\M_{>\rho}$ be the mirror such that $H_i\cap P_1=F_i$. Then the distance~$d$ from~$o$ to~$H_i$ is greater than~$\rho$. Since $H_i$ is closed and the balls in~$\U_P$ are compact, we obtain that there exists a point~$p\in H_i$ such that $\dist(o,p)=d$.
Let $x$ be a point in~$F_i$. Consider the comparison triangle $o^*x^*p^*$ in~$\Lob^2$ for the triangle~$oxp$. The geodesic segment~$[xp]$ is contained in~$H_i$. Hence, the distance from $o$ to every point $y$ of~$[xp]$ is greater than or equal to~$d$. By $\CAT(-1)$ inequality, the distance from~$o^*$ to every point of the segment~$[x^*p^*]$ is greater than or equal to~$d$. But the length of the segment~$[o^*p^*]$ is equal to~$d$. Hence the angle~$\beta^*$ of the triangle~$o^*x^*p^*$ at the vertex~$p^*$  is greater than or equal to~$\frac{\pi}{2}$. Let $\alpha^*$ be the angle of the triangle~$o^*x^*p^*$ at the vertex~$o^*$. Then 
$$
\cot\alpha^*\ge\sinh d>\sinh\rho=\cot\frac{\varepsilon}{2}.
$$
Therefore, $\alpha^*<\frac{\varepsilon}{2}$. 

A neighbourhood of~$o$ in~$\U_P$ is isometric to an open subset of~$\Lob^n$. Hence the angle~$\alpha$ of the triangle~$oxp$ at the vertex~$o$ is well defined and is equal to $\dist(f(x),f(p))$. The $\CAT(-1)$ inequality easily implies that 
$$
\dist(f(x),f(p))=\alpha\le\alpha^*<\frac{\varepsilon}{2}.
$$
Hence, for any two points $x,y\in F_i$, we have $\dist(f(x),f(y))<\varepsilon$. 
\end{proof}

For each~$i\in[m_1]$, choose a subset~$\omega_i$ such that $f(F_i)\cap h(F_{\omega_i})\ne \emptyset$.  Since $\diam(f(F_i))<\varepsilon$, property~$(*)$ implies that $\varphi(f(F_i))\subset F_{\omega_i}$. 
Define the homomorphism
$
\xi:\Z_2^{m_1}\to\Z_2^n
$
by $\xi(a_i)=b_{|\omega_i|}$, where $a_1,\ldots,a_{m_1}$ are the generators of~$\Z_2^{m_1}$  and $b_1,\ldots,b_n$ are the generators of~$\Z_2^n$.

Though we are not sure that $P_1$ is a simple cell, we can apply the construction of~$\RZ_{P_1}$ described in section~\ref{section_sc}. Namely, we put
$$
\RZ_{P_1}=(P_1\times \Z_2^{m_1})/\sim,
$$
where $(p,g)\sim(p',g')$ if and only if $p=p'$ and $g'g^{-1}$ belongs to the subgroup of~$\Z_2^{m_1}$ generated by all~$a_i$ such that $p\in F_i$. 
Let $\eta_1:W_1\to\Z_2^{m_1}$ be the homomorphism that takes the reflection in every mirror~$H_i$ to $a_i$, and let $\Gamma_1=\ker\eta_1$.
Then $\Gamma_1$ is torsion-free and $\RZ_{P_1}=\U_P/\Gamma_1$. Therefore $\RZ_{P_1}$ is a PL manifold.

Now we define a mapping $\Phi:\RZ_{P_1}\to M_{\Pi^n}$ by 
$$
\Phi([p,g])=[\varphi(f(p)),\xi(g)].
$$
Since $\varphi(f(F_i))\subset F_{\omega_i}$, we obtain that $\Phi$ is well defined. Since $f$ and~$\varphi$ are degree~$1$ mappings, we obtain that the degree of~$\Phi$ is equal to $2^{m_1-n}\ne 0$. Therefore, $\RZ_{P_1}$ is a URC-manifold. Since $\RZ_P$ and $\RZ_{P_1}$ possess a common finite-sheeted covering, we obtain that $\RZ_P$ is a URC-manifold.
\end{proof}

\begin{remark}
\label{remark_simple_cell} The set~$P_1$ is a convex subset of a $\CAT(-1)$ manifold, hence, $P_1$ is contractible. Similarly, faces of~$P_1$ are convex and, hence, contractible. Stone~\cite{Sto76} proved that closed metric balls in a piecewise Euclidean or piecewise hyperbolic $\CAT(0)$-manifold are homeomorphic to the standard ball, see also~\cite{DaJa91b} and~\cite{Dav02}. This proof seems to be possible to extend to the case of an arbitrary convex subset. However, to guarantee that $P_1$ is a simple cell we still need a stronger result. Namely, we need to prove that the homeomorphisms of facets of~$P_1$ onto the standard balls can be chosen piecewise linear and compatible to each other. The author does not know whether this is possible or not. Actually, even if it were possible to deduce that $P_1$ is a simple cell from results of~\cite{Sto76}, \cite{DaJa91b}, and~\cite{Dav02}, we would prefer to avoid usage of these results, since they are based on hard theorems of geometric topology.
\end{remark}

\end{document}